\newtheorem{theorem}{Theorem}[section]
\newtheorem{lemma}[theorem]{Lemma}
\newtheorem{corollary}[theorem]{Corollary}
\theoremstyle{definition}
\newtheorem{definition}[theorem]{Definition}
\newtheorem{example}[theorem]{Example}
\theoremstyle{remark}
\newtheorem{remark}[theorem]{Remark}
\numberwithin{equation}{section}
\def\dsum{\displaystyle\sum}
\def\dmax{\displaystyle\max}
\newcommand{\pa}{\partial}
\newcommand{\al}{\alpha}
\newcommand{\la}{\lambda}
\newcommand{\abc}[1]{\left( #1 \right)}%
\newcommand{\abz}[1]{\left[ #1 \right]}
\begin{document}
\title[Pogorelov type $C^2$ estimates for Sum Hessian equations]{Pogorelov type $C^2$ estimates for Sum Hessian equations and a rigidity theorem}

\author{Yue Liu}
\address{Yue Liu, School of Mathematical Science, Jilin University, Changchun, 130012, Jilin Province, P.R. China}\email{yue\_liu19@mails.jlu.edu.cn}

\author{Changyu Ren}
\address{Changyu Ren, School of Mathematical Science, Jilin University, Changchun, 130012, Jilin Province, P.R. China}\email{rency@jlu.edu.cn}

\thanks{Research of the last author is supported  by an NSFC Grant No. 11871243}

\begin{abstract}
We mainly study Pogorelov type $C^2$ estimates for solutions to the
Dirichlet problem of Sum Hessian equations. We establish
respectively Pogorelov type $C^2$ estimates for $k$-convex solutions
and admissible solutions under some conditions. Furthermore, we
apply such estimates to obtain a rigidity theorem for $k$-convex
solutions of Sum Hessian equations in Euclidean space.
\end{abstract}

\keywords {Hessian equations, Pogorelov type $C^2$ estimate,
Rigidity theorem.}

\subjclass{53C23, 35J60, 53C42}

\maketitle

\section{Introduction}

In this paper, we consider the following Dirichlet problem of Sum
Hessian equations,
\begin{equation}\label{1.1}
\left\{\begin{array}{lll}
 {{\sigma }_{k}}(D^2u)+\alpha{{\sigma
}_{k-1}}(D^2u)=f (x,u,Du),  &in &\Omega,\\
u=0,  & on&\pa{\Omega}.
\end{array} \right.
\end{equation}
Here, $u$ is an unknown function defined on $\Omega$, $\alpha$ is a
positive constant. Denote by $Du$ and $D^2u$ the gradient and the
Hessian of $u$. We also require $f>0$ and smooth enough with respect
to every variables. Let $\sigma_k(D^2u)=\sigma_k(\lambda(D^2u))$
denotes the $k$-th elementary symmetric function of the eigenvalues
of the Hessian matrix $D^2u$.~Namely, for
$\lambda=(\lambda_1,\cdots,\lambda_n)\in \mathbb{R}^n$,
$$
\sigma_k(\lambda)=\dsum_{1\leq i_1<\cdots<i_k\leq
n}\lambda_{i_1}\cdots\lambda_{i_k}.
$$

As we all know, the following classic $k$-Hessian equation
\begin{equation}\label{1.2}
\sigma_k(D^2u)=f(x,u,Du),~~~~~~~~x\in\Omega
\end{equation}
is an important research content in the field of fully nonlinear
partial differential equations and geometric analysis, which is
closely related to many important geometric problems, such as
\cite{BK,CNS3,CNS4,CY,GLL,GLM,O,P3,TW}. The equation \eqref{1.1} is
a natural extension of \eqref{1.2}, which is a Hessian type equation
formed by linear combination of the $k$ Hessian operators. These
Hessian type equations have been studied and applied in geometry.
Harvey and Lawson \cite{HL80} considered the special Lagrangian
equations which is one of this category.  Krylov \cite{Kry} and Dong
\cite{Dong} also considered equations close to this type and
obtained the curvature estimates using the concavity of the
operators. In \cite{GZ}, Guan and Zhang studied the curvature
estimates for equations of such type with the right hand side not
depending on gradient term but with coefficients depending on the
position of the hypersurfaces. The geometrical problems in the
hyperbolic space also reduce to equations of this type \cite{EGM}.

\par
An important problem in the study of Hessian equations (\ref{1.2})
is how to obtain $C^2$ estimates. For the relative work, we refer
the readers to \cite{CNS1,CNS3,CW,B,GLL,GLM,GRW}. Especially when
the right hand side function depends on the gradient, how to obtain
$C^2$ estimates for equations (\ref{1.2}) is a longstanding problem
\cite{GLL}.

\par
In this paper, we are interested in Pogorelov type $C^2$ estimates
for the Dirichlet problem \eqref{1.1}. Pogorelov type $C^2$
estimates is a type of interior $C^2$ estimates with boundary
information. This type of $C^2$ estimates were established at first
by Pogorelov \cite{P3} for ~Monge-Amp\`ere~ equations, which play an
important role in studying the regularity for fully nonlinear
equations. For example, Pogorelov type $C^2$ estimates were
established in order to study the regularity for degenerate
~Monge-Amp\`ere~ equations \cite{Blocki,Savin}. For $k$-Hessian
equations \eqref{1.2}, if the right hand side function is
$f=f(x,u)$, Chou and Wang ~\cite{CW} established Pogorelov $C^2$
estimates of $k$ convex solutions:
\begin{equation*}
(-u)^{1+\delta}\Delta u\leq C.
\end{equation*}
Sheng-Ubas-Wang \cite{SUW} established Pogorelov type curvature
estimates for the curvature equations of the graphic hypersurface
including the Hessian equations. If the right hand side function
depends on the gradient, Li-Wang and the last author~\cite{LRW1}
established Pogorelov type $C^2$ estimates for $k+1$ convex
solutions for equation (\ref{1.2}):
\begin{equation*}
(-u)\Delta u\leq C.
\end{equation*}
In particular, for $2$ convex solutions of the case $k=2$, the
following Pogorelov type $C^2$ estimates hold:
$$(-u)^\beta \Delta u\leq C.$$
Here, positive constants $\beta$ and $C$ depend on the domain
$\Omega$, $f$, $\displaystyle\sup_\Omega |u|$ and
$\displaystyle\sup_\Omega |Du|$.

\par
A natural question is whether Pogorelov type $C^2$ estimates are
still valid for the Dirichlet problem \eqref{1.1}?

\par
First, we give the definition of $k$ convex solution \cite{CNS3}:
\begin{definition}\label{k-convex}
For a domain $\Omega\subset \mathbb R^n$, a function $v\in
C^2(\Omega)$ is called $k$ convex if the eigenvalues $\lambda
(x)=(\lambda_1(x), \cdots, \lambda_n(x))$ of the Hessian $\nabla^2
v(x)$ is in $\Gamma_k$ for all $x\in \Omega$, where $\Gamma_k$ is
the Garding's cone
\[\Gamma_k=\{\lambda \in \mathbb R^n \ | \quad \sigma_m(\lambda)>0, \quad  m=1,\cdots,k\}.\]
\end{definition}
\par
The main results are as follows:

\par
\begin{theorem}\label{th1.1}
Let $\Omega$ be a bounded domain in $\mathbb{R}^n$, and let $u\in
C^4(\Omega)\cap C^{0,1}(\overline{\Omega})$ be a $k-1$ convex
solution for the Dirichlet problem (\ref{1.1}), where $f(x,u,p)\in
C^{1,1}(\overline{\Omega}\times \mathbb{R}\times \mathbb{R}^n)$ is a
positive function. Then we have
\par
(a) For $k=2,k=3$, there are some positive constants $\beta$ and
$C$, such that
\begin{equation}\label{1.3}
(-u)^\beta\Delta u\leq C.
\end{equation}
\par
(b) For $1<k\leq n$, if $f^{\frac{1}{k}}(x,u,p)$ is a convex
function with respect to $p$, there are some positive constants
$\delta$ and $C$, such that
\begin{equation}\label{1.4}
(-u)^{1+\delta}\Delta u\leq C.
\end{equation}
Here, $\delta>0$ can be arbitrarily small, the positive constants
$\beta$ and $C$ depend on the domain $\Omega$, $\alpha$, $f$,
$\displaystyle\sup_\Omega |u|$ and $\displaystyle\sup_\Omega |Du|$.
\end{theorem}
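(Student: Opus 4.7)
\textbf{Proof proposal for Theorem \ref{th1.1}.}

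The plan is to run the standard Pogorelov auxiliary function argument tailored to the combined operator $F = \sigma_k + \alpha \sigma_{k-1}$. I would introduce
\[
W(x,\xi) \;=\; \gamma \log(-u) \;+\; \log u_{\xi\xi} \;+\; \frac{a}{2}|Du|^2,
\]
with $\xi$ a unit direction and $a>0$ a constant to be chosen large (depending on $\sup|u|,\sup|Du|$); for part (a) I would take $\gamma=\beta$ small, and for part (b), $\gamma = 1+\delta$ with $\delta$ small. Because $u=0$ on $\partial\Omega$, $W\to -\infty$ near the boundary, so a maximum is attained at some interior $x_0\in\Omega$ in some direction $\xi=e_1$. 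After rotating, I may assume $(u_{ij}(x_0))$ is diagonal with $u_{11}\geq u_{22}\geq \cdots\geq u_{nn}$, and the argument then reduces to bounding $u_{11}(x_0)$.

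From $W_i(x_0)=0$ I would read off
\[
\frac{u_{11i}}{u_{11}} \;=\; -\frac{\gamma u_i}{u} \;-\; a\,u_i u_{ii},
\]
and from $F^{ii}W_{ii}(x_0)\le 0$, using the linearized operator $F^{ii}=\sigma_k^{ii}+\alpha\sigma_{k-1}^{ii}$, I would obtain
\[
\sum_i F^{ii}\frac{u_{11ii}}{u_{11}} - \sum_i F^{ii}\frac{u_{11i}^2}{u_{11}^2}
\;+\;\gamma \sum_i F^{ii}\frac{u_{ii}}{-u} - \gamma \sum_i F^{ii}\frac{u_i^2}{u^2}
\;+\; a\sum_i F^{ii}(u_{ii}^2+u_j u_{iij})\;\le\; 0.
\]
The $u_{11ii}$ terms are replaced by twice differentiating the equation: $\sum_i F^{ii}u_{ii11} = (f)_{11}$-type expression plus the algebraic identities for $\sigma_k$ and $\sigma_{k-1}$, generating both a good negative term $-\sum_{i\ne j}(F^{ij,pq}u_{pq1}u_{ij1})$ coming from concavity and bad cross terms. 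The essential ``good term'' is the one of Lin--Trudinger type
\[
\sum_i F^{ii}u_{ii}^2 \;\ge\; c(n,k)\,\sigma_1(\lambda)\,u_{11}\quad\text{or}\quad \ge c\,u_{11}^2,
\]
which in particular dominates any polynomial expression in $u_{11}$ coming from the first-derivative terms.

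The hard step is the last one: absorbing the terms $u_{11i}^2/u_{11}^2$ and the mixed gradient contributions of $f(x,u,Du)$. I would split indices into ``large'' ($u_{ii}\ge \theta u_{11}$) and ``small,'' using on large indices the Chou--Wang/Li--Ren--Wang type inequality for $\sigma_k$ combined with the analogous bound for $\sigma_{k-1}$ in the sum operator, and on small indices the quadratic term $a F^{ii}u_{ii}^2$ from the $|Du|^2$ contribution. For part (a), $k=2,3$, the specific structure of $F$ (few ``large'' terms are needed) plus the $k-1$ convexity of $u$ (equivalently $\sigma_j(\lambda)>0$ for $j\le k-1$, giving control of $F^{ii}$ from below) lets me close the estimate only for small $\beta$. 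For part (b), the convexity of $f^{1/k}$ in $p$ is what is needed to handle $\sum F^{ii}u_j u_{iij}$: differentiating $F^{1/k}(D^2u) = (f)^{1/k}(x,u,Du)$ once and using convexity in $p$, I get a sign on $\sum F^{ii}(f^{1/k})_{p_j p_l} u_{ij}u_{il}$ that absorbs the bad term, analogously to the argument in \cite{LRW1}. Choosing $\delta>0$ (or $\beta$) sufficiently small and $a$ sufficiently large relative to $\sup|u|,\sup|Du|,\|f\|_{C^{1,1}}$, I conclude that $u_{11}(x_0)$, and hence $W(x_0)$, is bounded, giving \eqref{1.3}--\eqref{1.4}.

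The main obstacle I anticipate is the passage from the pure $\sigma_k$ proof to the sum operator while keeping the ``large index'' argument working: the terms coming from $\alpha\sigma_{k-1}^{ii}$ have different homogeneity than those from $\sigma_k^{ii}$, and one must verify that both provide the same signed quadratic control $F^{ii}u_{ii}^2 \ge c\,u_{11}(\sigma_k+\alpha\sigma_{k-1})$-type bound, which is exactly where the restriction $k\le 3$ in part (a) should enter.
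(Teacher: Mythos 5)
Your overall scheme is the same as the paper's: the test function there is $\phi=\lambda_1(-u)^{\beta}\exp\{\tfrac{\varepsilon}{2}|Du|^2+\tfrac{a}{2}|x|^2\}$ (with the $|x|^2$ term dropped and $\beta=1+2\delta$ for part (b)), the concavity of $S_k^{1/k}$ enters through $K(S_k)_1^2-S_k^{pp,qq}u_{pp1}u_{qq1}\ge 0$ with $K=\tfrac{k-1}{kS_k}$, and in part (b) the convexity of $f^{1/k}$ in $p$ is used exactly as you describe, to give a sign to the residual term $\bigl(f_{p_1p_1}-\tfrac{(k-1)f_{p_1}^2}{kf}\bigr)u_{11}$. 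But two of your choices would not close the argument. First, for part (a) you propose taking $\beta$ small; the paper's proof requires $\beta$ \emph{large} (it needs $\beta>6$ for the third-derivative inequality \eqref{3.8} and $\beta>a^2$ for \eqref{3.15}, with $a$ itself large), and Remark 1.3 states explicitly that $\beta$ cannot be pushed down to $1$ or $1+\delta$ when $f$ depends on $Du$. The reason is structural: the critical-point relation $u_{11i}/u_{11}=-\beta u_i/u-\varepsilon u_iu_{ii}-ax_i$ is used to replace $-S_k^{ii}u_{11i}^2/u_{11}^2$ for $i>\mu$ by terms carrying a factor $1/\beta$, and the resulting $\tfrac{4}{\beta}\sum_{i>\mu}S_k^{ii}(ax_i)^2$ must be absorbed by $\tfrac{a}{3}\sum_iS_k^{ii}$; with $\beta$ small and $a$ large this absorption fails. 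A small-$\beta$ claim is strictly stronger than the theorem and is not attainable by this method.

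Second, and more importantly, you leave the actual mechanism behind the restriction $k\le 3$ unresolved (you flag it as "the main obstacle I anticipate"), and your guess at where it enters is not the right place. The inequality $\lambda_1 S_k^{11}\ge\theta S_k$ (hence $\sum_iS_k^{ii}u_{ii}^2\ge\theta S_k\lambda_1$) holds for \emph{all} $k$ on $\tilde\Gamma_k$ (Lemma 2.4a(b) of the paper), so that is not where $k\le 3$ bites; moreover that good term comes with the small coefficient $\varepsilon$ and cannot absorb $-Cu_{11}$. What the paper actually uses is the trace bound
\begin{equation*}
\sum_{i}S_2^{ii}=(n-1)\sigma_1+n\alpha>(n-1)\sigma_1,\qquad
\sum_{i}S_3^{ii}=(n-2)\sigma_2+(n-1)\alpha\sigma_1>(n-1)\alpha\sigma_1,
\end{equation*}
together with $\sigma_1>\lambda_1-\alpha$ on $\tilde\Gamma_2$: for $k=2,3$ the trace of the linearized operator grows linearly in $\lambda_1$ (for $k=3$ this is precisely where the $\alpha\sigma_{k-1}$ summand of the operator is essential), so the term $a\sum_iS_k^{ii}$ with $a$ large absorbs the possibly negative gradient term of order $u_{11}$. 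For $k\ge 4$ the trace is only comparable to $\sigma_{k-1}+\alpha\sigma_{k-2}$, which need not dominate $\lambda_1$, and the argument genuinely breaks. Without supplying this (or an equivalent) device, part (a) is not proved. A minor further point: if you work with $\log u_{\xi\xi}$ you should address the non-smoothness of $\lambda_1$ at points of multiplicity; the paper does this with the Brendle--Choi--Daskalopoulos viscosity-type lemma, which also supplies the extra positive terms $2\sum_{j>\mu}u_{1ji}^2/(\lambda_1(\lambda_1-\lambda_j))$ that are needed in its Case A.
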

\begin{remark}
In \cite{LRW2}, Li-Wang and the last author proved that Sum Hessian
operator $\sigma_k+\al\sigma_{k-1}$ is elliptic operator in the set
$$\tilde\Gamma_k=\Gamma_{k-1}\cap\{\la|\sigma_k(\la)+\al\sigma_{k-1}(\la)>0\},$$
i.e., $\tilde\Gamma_k$ is the admissible set of the equations
(\ref{1.1}). Here, in addition to the requirements for $k-1$ convex
solutions, the right hand side function $f>0$ ensures the
ellipticity of the equations (\ref{1.1}).
\end{remark}

\begin{remark}
Similar to the result in \cite{LRW2}, because $f$ depends on the
gradient, the constant $\beta$ is large in theorem \ref{th1.1}, and
we cannot improve $\beta$ to 1 or $1+\delta$.
\end{remark}

\par
\begin{theorem}\label{th1.2}
Let $\Omega$ be a bounded domain in $\mathbb{R}^n$, and let $u\in
C^4(\Omega)\cap C^{0,1}(\overline{\Omega})$ be a $k$ convex solution
for the Dirichlet problem (\ref{1.1}), where $f(x,u,p)\in
C^{1,1}(\overline{\Omega}\times \mathbb{R}\times \mathbb{R}^n)$ is a
positive function. Then we have
\begin{equation*}
(-u)\Delta u\leq C.
\end{equation*}
Here, positive constant $C$ depends on the domain $\Omega$,
$\alpha$, $f$, $\displaystyle\sup_\Omega |u|$ and
$\displaystyle\sup_\Omega |Du|$.
\end{theorem}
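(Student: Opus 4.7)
The plan is to prove Theorem \ref{th1.2} by the auxiliary-function maximum-principle method, adapting the scheme that Li, Wang and the last author used in \cite{LRW1} for $\sigma_k$-equations to the Sum Hessian operator $F=\sigma_k+\alpha\sigma_{k-1}$. The structural improvement over Theorem \ref{th1.1}(b) is that $k$-convexity places $D^2u$ in the Garding cone $\Gamma_k$, where the concavity of $\sigma_k^{1/k}$ and the sharp Newton-MacLaurin inequalities are available; these are precisely the tools needed to sharpen the exponent on $-u$ from $1+\delta$ to $1$.

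Concretely, I would consider the test function
\[
\Phi(x,\xi)=\log u_{\xi\xi}(x)+\frac{a}{2}|Du(x)|^2+b\log(-u(x)),\qquad x\in\Omega,\ \xi\in\mathbb{S}^{n-1},
\]
with positive constants $a,b$ to be fixed at the end. Because $u=0$ on $\partial\Omega$, $\Phi\to-\infty$ near the boundary, and so its maximum is attained at an interior point $x_0$ in some direction $\xi_0$; after a rotation $\xi_0=e_1$ and $D^2u(x_0)$ is diagonal with $\lambda_1=u_{11}(x_0)=\max_i\lambda_i$. The first-order condition at $x_0$ reads
\[
\frac{u_{11i}}{u_{11}}=-a\,u_i\lambda_i-b\,\frac{u_i}{u},\qquad i=1,\dots,n,
\]
and the second-order condition $F^{ii}\Phi_{ii}(x_0)\leq 0$, combined with the twice-differentiated equation (which expresses $F^{ii}u_{11ii}$ in terms of $F^{ij,rs}u_{ij1}u_{rs1}$ and second derivatives of $f$), produces an inequality whose main positive term is $aF^{ii}\lambda_i^2$, balanced against bounded first-order contributions in $f$, the gradient-quadratic term $f_{p_1p_1}\lambda_1^2/u_{11}=f_{p_1p_1}\lambda_1$ that is only linear in $\lambda_1$, and the third-order block $-F^{ii}u_{11i}^2/u_{11}^2-F^{ij,rs}u_{ij1}u_{rs1}/u_{11}+aF^{ii}u_ju_{jii}$.

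The third-order block is the main obstacle, since $F$ itself is not concave (only $F^{1/k}$ is). My plan is to apply the standard concavity/Andrews-Gerhardt inequality
\[
-F^{ij,rs}u_{ij1}u_{rs1}\geq -\Bigl(1-\frac{1}{k}\Bigr)\frac{(F^{ii}u_{ii1})^2}{F}+2\sum_{i\geq 2}\frac{F^{ii}-F^{11}}{\lambda_1-\lambda_i}u_{11i}^2,
\]
valid on $\Gamma_k$ (the $\alpha\sigma_{k-1}$ part contributing its own concavity via $\sigma_{k-1}^{1/(k-1)}$). The first term on the right is $O(\lambda_1)$ by the first-derivative equation $F^{ii}u_{ii1}=f_1+f_u u_1+f_{p_1}\lambda_1$ and is harmless. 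The off-diagonal Lipschitz correction cancels $F^{ii}u_{11i}^2/u_{11}^2$ for \emph{good} indices $i\geq 2$ with $\lambda_i\leq\theta\lambda_1$ (small $\theta>0$). For $i=1$ and the remaining \emph{bad} indices, the first-order relation substitutes $u_{11i}/u_{11}$ by $-au_i\lambda_i-bu_i/u$, so that the bad contribution becomes quadratic in $a|Du|$ and $b|Du|/(-u)$ and is absorbed by $aF^{ii}\lambda_i^2$ and $-bF^{ii}u_i^2/u^2$ after appropriate choice of constants.

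Finally, the Newton-MacLaurin inequality on $\Gamma_k$ provides $\sum_i F^{ii}\lambda_i^2\geq c\lambda_1^2$ when $\lambda_1$ is dominant; fixing $a$ and $b\geq 1$ depending only on $\Omega$, $\alpha$, $f$, $\sup_\Omega|u|$ and $\sup_\Omega|Du|$, the inequality forces $(-u(x_0))\lambda_1(x_0)\leq C$. Evaluating $\Phi$ at an arbitrary point and using $\Delta u\leq n\lambda_1$ yields $(-u)\Delta u\leq C$ on $\Omega$.
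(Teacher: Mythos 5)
Your proposal follows the Section~3 scheme (test function $\log\lambda_1+b\log(-u)+\tfrac a2|Du|^2$), whereas the paper proves Theorem~\ref{th1.2} with the test function $m\log(-u)+\log P_m+\tfrac{mN}{2}|Du|^2$, $P_m=\sum_j(\lambda_j+K_0)^m$, and the whole of Lemmas~\ref{lem4.1}--\ref{lem4.2}. This is not a stylistic difference: the $\log\lambda_1$ route has a genuine gap that is exactly the reason the paper needs $\beta$ large (or convexity of $f^{1/k}$) in Theorem~\ref{th1.1} and switches to $P_m$ here. The problem is the term $-b\,F^{ii}u_i^2/u^2$ coming from $F^{ii}(\log(-u))_{ii}$. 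It is present for \emph{every} index $i$, in particular for indices with $\lambda_i$ small or negative (such indices always exist, since $\lambda_{k-1}\le C$ by \eqref{2.4}), where $aF^{ii}\lambda_i^2$ gives you nothing; and since $F^{ii}$ is largest for these indices, the term cannot be discarded. The only tool available is the first-order condition, which converts it into an extra $-\tfrac{2}{b}F^{ii}u_{11i}^2/u_{11}^2$ plus harmless pieces. The total third-order deficit then carries coefficient $1+\tfrac2b$, and the exact identity \eqref{3.8},
\begin{equation*}
\frac{2S_k^{11,ii}}{u_{11}}+\frac{2S_k^{11}}{\lambda_1(\lambda_1-\lambda_i)}-\frac{2+b}{b}\,\frac{S_k^{ii}}{u_{11}^2}
=\frac{(b-2)\lambda_1+(b+2)\lambda_i}{b(\lambda_1-\lambda_i)}\,\frac{S_k^{ii}}{\lambda_1^2},
\end{equation*}
shows that the off-diagonal/Lipschitz corrections you invoke can only supply coefficient $\tfrac{2\lambda_1}{\lambda_1-\lambda_i}$, which is $\ge 1+\tfrac2b$ for $b=1$ only when $\lambda_i\ge\lambda_1/3$. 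Your good/bad dichotomy is therefore inverted: the cancellation works when $\lambda_i$ is comparable to $\lambda_1$ and \emph{fails} when $\lambda_i$ is small, and for those indices you are left with an uncontrolled negative term of size $F^{ii}u_{11i}^2/\lambda_1^2$ about which the differentiated equation gives no information.

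A secondary, related difficulty: the trace term $-(1-\tfrac1k)(F^{ii}u_{ii1})^2/(F u_{11})$ is of order $-C\lambda_1$ with $C$ depending on $f_{p_1}$, and the only linear-in-$\lambda_1$ positive term you have is $aF^{11}\lambda_1^2\ge a\theta f\lambda_1$ (via \eqref{2.53}); but you also need $a<1/(4\sup|Du|^2)$ to absorb $-2a^2u_i^2F^{ii}\lambda_i^2$, and these two constraints on $a$ can be incompatible. The paper resolves both issues at once by replacing $\lambda_1$ with $P_m$: the extra third-order terms $(m-1)\sum_j\kappa_j^{m-2}u_{jji}^2$ and $\sum_{p\ne q}\frac{\kappa_p^{m-1}-\kappa_q^{m-1}}{\kappa_p-\kappa_q}u_{pqi}^2$ (the $C_i,D_i$ blocks) dominate the square $E_i$ with a surplus $\tfrac1m E_i$ (Lemma~\ref{lem4.1}, Corollary~\ref{cor4.1}), and it is precisely this surplus that absorbs $-S_k^{ii}u_i^2/u^2$ through \eqref{4.1} in the display after \eqref{4.31}, while the free large constant $N$ in the gradient weight kills $-Cu_{11}$. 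If you want a correct proof, you should adopt the $P_m$ construction (or otherwise manufacture a quantitative surplus in the third-order block); the $\log\lambda_1$ ansatz with $b=1$ does not close.
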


\par
An application of the interior $C^2$ estimates is to prove rigidity
theorems for equations. For the following $k$-Hessian equations in
$n$-dimensional Euclidean spaces,
\begin{equation}\label{1.5}
\sigma_k(D^2u)=1,
\end{equation}
Chang and Yuan~\cite{CY} proposed a problem that: Are the entire
solutions of (\ref{1.5}) with lower bound only quadratic
polynomials?

\par
For $k=1$, equation (\ref{1.5}) is a linear equation. It is a
obvious result coming from the Liouville property of the harmonic
functions. For $k=n$, equation (\ref{1.5}) is ~Monge-Amp\`ere~
equation, which is a well known theorem. The work of
J\"{o}rgens~\cite{K}, Calabi~\cite {E}, Pogorelov~\cite{P1,P3}
proved that every entire strictly convex solution is a quadratic
polynomial. Then, Cheng and Yau \cite{CYa} gave another more
geometry proof.
 In 2003, Caffarelli and Li
~\cite{CL} extended the theorem of J\"{o}rgens, Calabi and
Pogorelov.

\par
For general $1<k<n$, there are few related results. For $k=2$, Chang
 and Yuan~\cite{CY} have proved that, if
$$D^2u\geq\delta-\sqrt{\dfrac{2n}{n-1}},$$
for any $\delta>0$, then the entire solution of the equation
(\ref{1.5}) only has quadratic polynomials. For general $k$,
Bao-Chen-Guan-Ji~\cite{BCGM} proved that strictly entire convex
solutions of equation (\ref{1.5}) satisfying a quadratic growth are
quadratic polynomials. Here, the quadratic growth means that: there
are some positive constants $c, b$ and sufficiently large $R$, such
that
\begin{equation}\label{1.6}
 u(x)\geq c|x|^2-b,  |x|\geq R.
\end{equation}
\cite{LRW1} relaxed the condition of strictly convex solutions in
\cite{BCGM} to $k+1$ convex solutions, and proved that the entire
$k+1$ convex solutions of the equations (\ref{1.5}) with quadratic
growth are quadratic polynomials. In 2016, Warren \cite{Warren}
proved that equation (\ref{1.5}) has non-polynomial entire $k$
convex solutions for $n\geq2k-1$ by constructing examples.

In this paper, we apply Pogorelov type $C^2$ estimates to prove the
rigidity theorem of the following Sum Hessian equations in Euclidean
space,
\begin{equation}\label{1.7}
\sigma_k(D^2u)+\al\sigma_{k-1}(D^2u)=1.
\end{equation}

\par
\begin{theorem}\label{th1.3}
The entire $k$ convex solutions of the equations (\ref{1.7}) defined
in $\mathbb{R}^n$ with quadratic growth (\ref{1.6}) are quadratic
polynomials.
\end{theorem}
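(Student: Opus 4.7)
The plan is to adapt the scaling-and-rigidity scheme of \cite{BCGM,LRW1} to the Sum Hessian equation, using the Pogorelov-type $C^2$ bound of Theorem \ref{th1.2} as the main new ingredient. For each large $R>0$, introduce the sublevel set $\Omega_R=\{x\in\mathbb R^n : u(x)<R\}$. The quadratic growth hypothesis (\ref{1.6}) gives $\Omega_R\subset B(0,\sqrt{(R+b)/c})$, so $\Omega_R$ is bounded with $\mathrm{diam}(\Omega_R)=O(\sqrt R)$, and since any $k$-convex function is subharmonic we have $u=R$ on $\partial\Omega_R$ (after standard approximation of $\Omega_R$ by smooth sublevels, which is available for almost every $R$ by Sard's theorem). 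Introduce the rescaling
$$\tilde u_R(y):=\frac{u(\sqrt R\,y)}{R},\qquad \tilde\Omega_R:=\frac{1}{\sqrt R}\,\Omega_R.$$
Since $D^2\tilde u_R(y)=D^2u(\sqrt R\,y)$, the rescaled $\tilde u_R$ satisfies the same equation $\sigma_k(D^2\tilde u_R)+\al\sigma_{k-1}(D^2\tilde u_R)=1$ on $\tilde\Omega_R$, with $\tilde u_R=1$ on $\partial\tilde\Omega_R$ and $-b/R\le\tilde u_R\le 1$ in the interior. In particular $\mathrm{diam}(\tilde\Omega_R)$ and $\|\tilde u_R\|_{L^\infty(\tilde\Omega_R)}$ are uniformly bounded in $R$.

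Apply Theorem \ref{th1.2} to the $k$-convex function $v_R:=\tilde u_R-1\le 0$, which vanishes on $\partial\tilde\Omega_R$ and solves the Sum Hessian equation with $f\equiv 1$. Since $\sup|v_R|$ and $\mathrm{diam}(\tilde\Omega_R)$ are uniformly controlled, and since for $f\equiv 1$ the proof of Theorem \ref{th1.2} does not actually require any bound on $|Du|$ (the $p$-dependence of $f$ is absent), the resulting constant is independent of $R$. This yields
$$\bigl(1-\tilde u_R(y)\bigr)\,\Delta\tilde u_R(y)\le C\qquad\text{for every }y\in\tilde\Omega_R.$$
Fix an arbitrary $x_0\in\mathbb R^n$ and evaluate at $y_0=x_0/\sqrt R$: using $\tilde u_R(y_0)=u(x_0)/R\to 0$ and $\Delta\tilde u_R(y_0)=\Delta u(x_0)$, letting $R\to\infty$ gives $\Delta u(x_0)\le C$. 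Since $x_0$ was arbitrary, $\Delta u$ is uniformly bounded on all of $\mathbb R^n$.

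With $\Delta u=\sigma_1(D^2u)$ bounded and $\la(D^2u)\in\Gamma_k$, the standard Garding-cone inequality bounds each eigenvalue of $D^2u$ from below in terms of $\sigma_1$, so $\|D^2u\|_{L^\infty(\mathbb R^n)}<\infty$. On the compact range of $D^2u$ the Sum Hessian operator is uniformly elliptic, and its concavity (established in \cite{LRW2}) allows us to apply Evans-Krylov and obtain interior $C^{2,\alpha}$ estimates for $u$ on every ball. A Liouville-type argument, exactly as in \cite{BCGM,LRW1}, then forces $D^2u$ to be constant on $\mathbb R^n$: one differentiates the equation to obtain a linear uniformly elliptic equation for each $u_l$ with bounded Hölder coefficients, and the at-most-quadratic growth of $u$ (which now follows from the bound on $D^2u$) combined with a scaling argument implies $u_l$ is affine. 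Hence $u$ is a quadratic polynomial.

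The main technical obstacle is Step 2, namely the uniformity of the Pogorelov constant as $R\to\infty$. This relies either on exploiting the absence of $p$-dependence in the right-hand side $f\equiv 1$ (bypassing the $\sup|Du|$-dependence in Theorem \ref{th1.2} directly through inspection of the proof), or on establishing a uniform interior $C^1$ estimate for $k$-convex solutions of the Sum Hessian equation on domains of bounded diameter with bounded $C^0$ norm. The subsequent Liouville step is by now classical, being an immediate consequence of Evans-Krylov and the concavity of the operator in \cite{LRW2}.
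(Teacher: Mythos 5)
Your overall architecture --- rescale to sublevel sets of uniformly bounded diameter, apply a Pogorelov-type bound with constants independent of $R$ to conclude $\Delta u\le C$ on all of $\mathbb R^n$, then run Evans--Krylov and a scaling Liouville argument --- is exactly the paper's scheme (your scaling $u(\sqrt R\,y)/R$ is the paper's $u(Ry)/R^2$ after relabeling). The Liouville step at the end is fine. The problem is the step you yourself flag as the ``main technical obstacle'': you invoke Theorem \ref{th1.2}, whose constant depends on $\sup_\Omega|Du|$, and the claim that ``for $f\equiv 1$ the proof of Theorem \ref{th1.2} does not actually require any bound on $|Du|$'' is not correct as stated. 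The gradient dependence in that proof does not come only from the $p$-dependence of $f$: the test function itself is $m\log(-u)+\log P_m+\frac{mN}{2}|Du|^2$, and the term $\frac{mN}{2}|Du|^2$ is essential (it produces the good term $NS_k^{ii}u_{ii}^2$), while the bad terms it generates --- e.g.\ $S_k^{ii}N^2u_i^2u_{ii}^2$ and $\frac{2NS_k^{ii}u_i^2u_{ii}}{u}$ arising when \eqref{4.1} is substituted into $-\frac{S_k^{ii}u_i^2}{u^2}$ --- are absorbed precisely by using $\sup|Du|$. So setting $f\equiv1$ does not remove the gradient from the estimate. Your fallback (a uniform interior $C^1$ bound for $k$-convex solutions of the Sum Hessian equation on the rescaled domains) is also asserted without proof and is itself a nontrivial estimate that the paper never establishes.

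The paper closes this gap differently: it proves a separate Lemma \ref{lem5.1} for the equation $S_k(D^2u)=f(x)$ with right-hand side independent of $u$ and $Du$, using the modified test function $\phi=m\beta\log(-u)+\log P_m+\frac m2|x|^2$, i.e.\ replacing the gradient term $|Du|^2$ by $|x|^2$. The $|x|^2$ term is controlled by the diameter alone, the sup of $|u|$ is controlled by a comparison function $\frac a2|x|^2-b$, and the third-order analysis (Lemmas \ref{lem4.1}, \ref{lem4.2}, Corollary \ref{cor4.1}) goes through because the exponent $\beta$ on $(-u)$ can be taken large to absorb the $-\frac{\beta u_i^2}{u^2}$ term via \eqref{5.7}. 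The resulting constants depend only on $n$, $k$ and $\mathrm{diam}(\Omega)$, which is exactly the uniformity in $R$ that your argument needs. As written, your proof has a genuine gap at this point; to repair it you must either prove the analogue of Lemma \ref{lem5.1} (gradient-free test function) or supply the missing uniform interior gradient estimate.
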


At last, using the idea of  Warren \cite{Warren}, we can give an
example of non-polynomial entire $k-1$ convex solution.

\begin{example} When $n=3$, the
$1$-convex function
$$
u(x,y,t)=\dfrac{e^{4t}-1}{4}(x^2+y^2)+\dfrac{1}{16}(\dfrac{7e^{-4t}}{4}-\dfrac{e^{4t}}{4}-4t^2)
$$
solves
$$
\sigma_2(D^2u)+\sigma_1(D^2u)=1.
$$
\end{example}
\par
The organization of our paper is as follows: In section $2$, we give
the preliminary knowledge. The proofs of Theorem \ref{th1.1} and
Theorem \ref{th1.2} are respectively given in section 3 and section
4. In the last section, we prove the rigidity theorem of the
equations (\ref{1.7}).

\section{Preliminary}
\par
In this section, we give the preliminary knowledge related to our
theorems and their proofs. To make it convenient, we denote
\begin{equation*}
 S_k(\la):=\sigma_k(\la)+\alpha\sigma_{k-1}(\la).
\end{equation*}
For the related notation of $\sigma_k$, we use the denotation in
\cite{LRW2,RW1}. Let $\lambda=(\lambda_1,\cdots,\lambda_n)\in
\mathbb{R}^n$, we have
\par
(i) $S_k^{pp}(\lambda):=\dfrac{\partial
S_k(\lambda)}{\partial\lambda_p}=\sigma_{k-1}(\la|p)+\al\sigma_{k-2}(\la|p)=S_{k-1}(\lambda|p)$,
\quad $p=1,2,\cdots,n$;
\par
(ii)
$S_k^{pp,qq}(\lambda):=\dfrac{\partial^2S_k(\lambda)}{\partial\lambda_p\partial\lambda_q}=S_{k-2}(\lambda|pq)$,
\quad $p,q=1,2,\cdots,n$, \quad and $S_k^{pp,pp}(\la)=0$;
\par
(iii) $S_k(\lambda)=\lambda_iS_{k-1}(\lambda|i)+S_k(\lambda|i),$
$i=1,\cdots,n$;
\par
(iv)
$\dsum_{i=1}^nS_k(\lambda|i)=(n-k)S_k(\lambda)+\alpha\sigma_{k-1}(\lambda);$
\par
(v)
$\dsum_{i=1}^n\lambda_iS_{k-1}(\lambda|i)=kS_k(\lambda)-\alpha\sigma_{k-1}(\lambda).$

\par
In the joint work with Li, Wang and the last author \cite{LRW2}
proved that the admissible solution set of Sum Hessian operator
$S_k(\la)$ is
$$
\tilde\Gamma_k=\Gamma_{k-1}\cap\{\la|S_k>0\},
$$
and $S_k^{\frac{1}{k}}(\la)$ and
$(\dfrac{S_k}{S_l})^{\frac{1}{k-l}}$ are concave functions on
$\tilde\Gamma_k$. Using the method in \cite{GRW}, we can get the
following lemma.

\begin{lemma}\label{lem2.1}
Assume that $k>l$, for $\vartheta=\dfrac{1}{k-l}$, then for
$\la\in\tilde\Gamma_k$, we have
\begin{align}
-&\frac{S_k^{pp,qq}}{S_k}u_{pph}u_{qqh}+\dfrac{S_l^{pp,qq}}{S_l}u_{pph}u_{qqh}\\
&\geq \abc{\dfrac{(S_k)_h}{S_k}-\dfrac{(S_l)_h}{S_l}}
\abc{(\vartheta-1)\dfrac{(S_k)_h}{S_k}-(\vartheta+1)\dfrac{(S_l)_h}{S_l}}.\nonumber
\end{align}
Furthermore, for sufficiently small $\delta>0$, we have
\begin{align}\label{2.2}
-&S_k^{pp,qq}u_{pph}u_{qqh} +(1-\vartheta+\dfrac{\vartheta}{\delta})\dfrac{(S_k)_h^2}{S_k}\\
&\geq S_k(\vartheta+1-\delta\vartheta) \abz{\dfrac{(S_l)_h}{S_l}}^2
-\dfrac{S_k}{S_l}S_l^{pp,qq}u_{pph}u_{qqh}.\nonumber
\end{align}
\end{lemma}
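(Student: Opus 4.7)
The plan is to derive both inequalities from the concavity of $Q:=(S_k/S_l)^{\vartheta}$ on $\tilde{\Gamma}_k$, which is recorded in the paragraph immediately preceding the statement. The main tool is logarithmic differentiation, supplemented for the second inequality by a single application of Young's inequality.

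At a point where $D^2u$ is diagonal, concavity of $Q$ as a function of eigenvalues yields $Q^{pp,qq} u_{pph} u_{qqh} \le 0$. Writing $\log Q = \vartheta(\log S_k - \log S_l)$ and using the chain-rule identity $Q^{pp,qq} = Q\bigl[(\log Q)^{pp,qq} + (\log Q)^{pp}(\log Q)^{qq}\bigr]$, I split the contraction into two pieces. With the shorthand $A:=(S_k)_h/S_k$ and $B:=(S_l)_h/S_l$, one computes $\sum_p (\log Q)^{pp} u_{pph} = \vartheta(A-B)$, so the product term contributes $\vartheta^2(A-B)^2$. For the Hessian term I use $(\log S_k)^{pp,qq} = S_k^{pp,qq}/S_k - S_k^{pp}S_k^{qq}/S_k^2$ and the corresponding identity for $S_l$; contracting with $u_{pph}u_{qqh}$ turns the squared first-derivative parts into $A^2$ and $B^2$ respectively. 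Dividing through by $\vartheta Q > 0$ and factoring the resulting right-hand side as $(A-B)\bigl[(\vartheta-1)A - (\vartheta+1)B\bigr]$ then gives the first inequality of the lemma.

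For (\ref{2.2}) I multiply the first inequality by $S_k$ and expand the right-hand side to $S_k\bigl[(\vartheta-1)A^2 - 2\vartheta AB + (\vartheta+1)B^2\bigr]$. The only obstacle is the cross term $-2\vartheta AB$, which Young's inequality handles via $-2\vartheta AB \ge -(\vartheta/\delta)A^2 - \delta\vartheta B^2$ for any $\delta>0$. Collecting the resulting coefficients of $A^2$ and $B^2$, and using $S_kA^2=(S_k)_h^2/S_k$ to rewrite the $A^2$ term in the gradient form appearing on the left of (\ref{2.2}), reproduces the stated inequality. Beyond this I anticipate no genuine obstacle: the substantive input is the concavity already cited from \cite{LRW2}, and the rest is algebraic bookkeeping; the fact that the two coefficients $1-\vartheta+\vartheta/\delta$ and $\vartheta+1-\delta\vartheta$ are exchanged under $\delta\mapsto 1/\delta$ reflects the symmetry of Young's inequality and serves as a useful consistency check.
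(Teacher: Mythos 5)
Your proof is correct: the computation from concavity of $(S_k/S_l)^{\vartheta}$ via $Q^{pp,qq}=Q[(\log Q)^{pp,qq}+(\log Q)^{pp}(\log Q)^{qq}]$ gives exactly $(\vartheta-1)A^2-2\vartheta AB+(\vartheta+1)B^2$ on the right, and the Young step $-2\vartheta AB\geq -(\vartheta/\delta)A^2-\delta\vartheta B^2$ yields \eqref{2.2}. This is precisely the method of \cite{GRW} that the paper invokes (it omits the details), so your argument coincides with the intended proof.
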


The following Lemma comes from ~\cite{Ball}.

\begin{lemma} \label{lem2.2}
Denote by $Sym(n)$ the set of all $n\times n$ symmetric matrices.
Let $F$ be a $C^2$ symmetric function defined in some open subset
$\Psi \subset Sym(n)$. At any diagonal matrix $A\in\Psi$
 with distinct eigenvalues, let$\ddot{F}(B,B)$ be the second derivative of $C^2$ symmetric function $F$ in direction $B \in
Sym(n)$, then
\begin{align*}
 \ddot{F}(B,B) =  \sum_{j,k=1}^n {\ddot{f}}^{jk}
B_{jj}B_{kk} + 2 \sum_{j < k} \frac{\dot{f}^j -
\dot{f}^k}{{\kappa}_j - {\kappa}_k} B_{jk}^2.
\end{align*}
\end{lemma}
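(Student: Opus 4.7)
The approach is to reduce to a one-parameter computation along the line $A(t) = A + tB$ and then apply the chain rule twice to $F(A(t)) = f(\lambda_1(t),\ldots,\lambda_n(t))$, where $\lambda_i(t) := \lambda_i(A(t))$ and $f$ is the symmetric function of the eigenvalues representing $F$. Because the eigenvalues $\kappa_1,\ldots,\kappa_n$ of the diagonal matrix $A$ are distinct by hypothesis, standard analytic perturbation theory guarantees that each $t \mapsto \lambda_i(t)$ is smooth near $t=0$, so the Taylor expansion needed below is well-defined.

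First I would derive the first- and second-order Taylor coefficients of $\lambda_i(t)$ using Rayleigh--Schr\"odinger perturbation. Writing the perturbed eigenpair as $\lambda_i(t) = \kappa_i + t a_i + t^2 b_i + O(t^3)$ and eigenvector $e_i + t v_i + O(t^2)$, then matching powers of $t$ in $(A+tB)(e_i + tv_i + \cdots) = \lambda_i(t)(e_i+tv_i+\cdots)$ gives $a_i = B_{ii}$ at first order and
$$v_i = \sum_{j \neq i} \frac{B_{ji}}{\kappa_i - \kappa_j} e_j, \qquad \ddot{\lambda}_i(0) = 2b_i = 2\sum_{j\neq i} \frac{B_{ij}^2}{\kappa_i - \kappa_j}$$
at second order. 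The distinctness of the $\kappa_i$ is precisely what makes the denominators nonzero and $v_i$ well-defined.

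Then the ordinary chain rule gives
$$\ddot{F}(B,B) = \sum_{j,k=1}^n \ddot{f}^{jk}\,\dot{\lambda}_j(0)\dot{\lambda}_k(0) + \sum_{j=1}^n \dot{f}^j\,\ddot{\lambda}_j(0).$$
Substituting $\dot{\lambda}_j(0) = B_{jj}$ converts the first sum into $\sum_{j,k}\ddot{f}^{jk} B_{jj}B_{kk}$, which is already the first piece of the target identity. Substituting the expression for $\ddot{\lambda}_j(0)$ into the second sum and splitting the resulting off-diagonal double sum over $j \neq k$ into the portions $j<k$ and $j>k$, then relabeling indices in the latter and using $B_{jk}=B_{kj}$, combines into $2\sum_{j<k}(\dot{f}^j - \dot{f}^k) B_{jk}^2/(\kappa_j - \kappa_k)$, producing the second piece.

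The one slightly delicate ingredient is the eigenvector expansion underlying the formula for $b_i$; everything else is routine chain-rule bookkeeping and an index swap. The distinct-eigenvalue hypothesis is essential at exactly that step. (In applications the formula is typically used on matrices with possibly coincident eigenvalues by a limiting argument, since $(\dot{f}^j - \dot{f}^k)/(\kappa_j - \kappa_k)$ extends continuously to $\ddot{f}^{jk}$ when $\kappa_j = \kappa_k$, by the symmetry of $f$.)
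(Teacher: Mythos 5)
Your derivation is correct: the first- and second-order Rayleigh--Schr\"odinger coefficients $\dot{\lambda}_i(0)=B_{ii}$ and $\ddot{\lambda}_i(0)=2\sum_{j\neq i}B_{ij}^2/(\kappa_i-\kappa_j)$ are right, and the chain-rule bookkeeping plus the $j\leftrightarrow k$ index swap yields exactly the stated identity. The paper itself gives no proof of this lemma --- it is quoted directly from Ball's paper --- and your argument is the standard perturbation-theoretic derivation of that result, so there is nothing to compare beyond noting that your closing remark (the continuous extension of $(\dot{f}^j-\dot{f}^k)/(\kappa_j-\kappa_k)$ to coincident eigenvalues) is indeed the mechanism by which the authors later apply the formula at points where the eigenvalues need not be distinct.
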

\par

\begin{lemma}\label{lem2.3a}
Assume $\lambda=(\lambda_1,\cdots,\lambda_n)\in\tilde\Gamma_k$. If
$S_{k+1}(\la)>0$, then $\la\in\tilde\Gamma_{k+1}$.
\end{lemma}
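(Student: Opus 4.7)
\emph{Proof outline.} The plan is to reduce the lemma to the single assertion $\sigma_k(\la)>0$. Unwrapping the definition $\tilde\Gamma_{k+1}=\Gamma_k\cap\{\la\mid S_{k+1}(\la)>0\}$, this is indeed the only missing piece: the conditions $\sigma_j(\la)>0$ for $j\le k-1$ are built into $\la\in\tilde\Gamma_k\subset\Gamma_{k-1}$, and $S_{k+1}(\la)>0$ is assumed.

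To prove $\sigma_k(\la)>0$ I would introduce the scalar $x:=\sigma_k(\la)/\sigma_{k-1}(\la)$, which is well defined since $\sigma_{k-1}(\la)>0$, and pin it between two incompatible bounds unless $x>0$. First, dividing $S_k(\la)>0$ by $\sigma_{k-1}(\la)$ gives the lower bound $x>-\al$. Second, the classical Newton inequality
\begin{equation*}
\sigma_{k-1}(\la)\,\sigma_{k+1}(\la)\;\le\; c\,\sigma_k(\la)^2,\qquad c:=\frac{k(n-k)}{(k+1)(n-k+1)},
\end{equation*}
holds for \emph{every} $\la\in\R^n$ (no cone assumption is needed, since it is a consequence of the polynomial $\prod_i(t-\la_i)$ having only real roots), and satisfies $c<1$ whenever $1\le k\le n-1$. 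Dividing it by $\sigma_{k-1}(\la)^2$ gives $\sigma_{k+1}(\la)/\sigma_{k-1}(\la)\le cx^2$, and combining with $S_{k+1}(\la)=\sigma_{k+1}(\la)+\al\sigma_k(\la)>0$ (also divided by $\sigma_{k-1}(\la)$) yields
\begin{equation*}
cx^2+\al x \;>\; 0, \qquad \text{i.e.\ } x(cx+\al)>0.
\end{equation*}
This factorization forces either $x>0$ or $x<-\al/c$; the second alternative is excluded by $x>-\al>-\al/c$, where the last inequality uses $c<1$. Hence $x>0$, so $\sigma_k(\la)>0$ and $\la\in\tilde\Gamma_{k+1}$.

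The argument is really only a few lines. The one point that needs care is recognizing that Newton's inequality applies to \emph{any} real $\la$ (rather than requiring, say, $\la\in\Gamma_{k+1}$, which would be circular), and then checking that its constant $c$ is \emph{strictly} less than $1$, so that the spurious branch $x<-\al/c$ is genuinely ruled out by the bound $x>-\al$ coming from $S_k(\la)>0$. This is the small obstacle I expect; no concavity of $S_k^{1/k}$ or other machinery from the rest of the Preliminary section is required.
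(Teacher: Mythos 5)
Your proof is correct and rests on exactly the same two ingredients as the paper's: reduce the claim to $\sigma_k(\la)>0$, then play $S_k>0$ and $S_{k+1}>0$ off against Newton's inequality $\sigma_{k-1}\sigma_{k+1}\le c\,\sigma_k^2$ with $c\le 1$. The paper phrases this as a direct contradiction (assume $\sigma_k\le 0$ and multiply $\al\sigma_{k-1}>|\sigma_k|$ by $\sigma_{k+1}>\al|\sigma_k|$), which is just a repackaging of your quadratic $x(cx+\al)>0$ in the ratio $x=\sigma_k/\sigma_{k-1}$.
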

\par
\begin{proof}
By the definition of $\tilde\Gamma_{k+1}$, we only need to prove
$\sigma_k(\la)>0$. If $\sigma_k(\la)\leq 0$, since $S_k(\la)>0,
S_{k+1}(\la)>0$, then
$$
\al\sigma_{k-1}>|\sigma_k|,\quad \sigma_{k+1}>|\al\sigma_k|,
$$
namely, $\al\sigma_{k-1}\sigma_{k+1}>\al\sigma_k^2$, this
contradicts Newton's inequality.
\end{proof}

\begin{remark}
Using Lemma \ref{lem2.3a}, we can define $\tilde\Gamma_k$ as
$$
\tilde\Gamma_k=\{\la\in \mathbb{R}^n| S_m(\la)>0,\quad
m=1,2,\cdots,k\}.
$$
\end{remark}

\begin{lemma}\label{lem2.4a}
Assume that $\lambda=(\lambda_1,\cdots,\lambda_n)\in\tilde\Gamma_k$,
$1\leq k\leq n$, $\lambda_1\geq\cdots\geq\lambda_n$, then
\par
(a) For any $1\leq s<k$, we have
\begin{equation}\label{2.54}
S_s(\lambda)>\lambda_1\cdots\lambda_s+\al
\lambda_1\cdots\lambda_{s-1};
\end{equation}
\par
(b) For any $1\leq j\leq k-1$, there exists a positive constant
$\theta$ depending on $n,k$, such that
\begin{equation}\label{2.53}
S_k^{jj}(\la)\geq \dfrac{\theta S_k(\la)}{\la_j}.
\end{equation}
\end{lemma}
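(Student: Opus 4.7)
My plan is to establish (a) by induction on $s$ and then derive (b) as a consequence. For (a), the key tool is the one-step recursion
\[
S_s(\lambda) = \lambda_1 S_{s-1}(\lambda|1) + S_s(\lambda|1),
\]
which follows by combining property (iii) applied separately to $\sigma_s$ and to $\alpha\sigma_{s-1}$. The inductive base will rest on the admissibility $\lambda\in\tilde\Gamma_k$ with $k\geq 2$, which combined with the ordering $\lambda_1\geq\cdots\geq\lambda_n$ forces enough positivity on the lower-indexed eigenvalues to yield the claimed strict inequality at $s=1$. In the inductive step, one checks that the truncated vector $(\lambda|1)=(\lambda_2,\ldots,\lambda_n)$ still lies in an appropriate admissible sub-cone, so that the inductive hypothesis applies to $S_{s-1}(\lambda|1)$. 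Multiplying by $\lambda_1$ then produces the leading product $\lambda_1\cdots\lambda_s$, while the $\alpha$-term in $S_{s-1}(\lambda|1)$ produces the $\alpha\lambda_1\cdots\lambda_{s-1}$ correction, and the residual $S_s(\lambda|1)>0$ delivers the strict sign.

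For (b), I would apply the same recursion, now to $S_k$ itself:
\[
\lambda_j S_k^{jj}(\lambda) = S_k(\lambda) - S_k(\lambda|j).
\]
The target estimate $S_k^{jj}(\lambda)\geq \theta S_k(\lambda)/\lambda_j$ is thus equivalent to the gap bound $S_k(\lambda|j)\leq (1-\theta)S_k(\lambda)$. For $1\leq j\leq k-1$, the ordering together with $\lambda\in\Gamma_{k-1}$ guarantees $\lambda_j>0$ (since $\lambda_j$ is among the top $k-1$ eigenvalues), and the lower bound for $S_k(\lambda)$ coming from part (a) contains a factor proportional to $\lambda_j$, whereas the corresponding control on $S_k(\lambda|j)$ does not. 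This asymmetry is what supplies the required gap.

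The hardest part will be upgrading the strict pointwise inequality from (a) to a quantitative gap with a constant $\theta=\theta(n,k)$ that is uniform over $\tilde\Gamma_k$. Because $S_k=\sigma_k+\alpha\sigma_{k-1}$ is not homogeneous, a naive scaling argument fails; instead I would combine the Newton--MacLaurin type inequalities for $\sigma_k$ and $\sigma_{k-1}$ valid on $\tilde\Gamma_k$ (recall from the remark after Lemma \ref{lem2.3a} that $\tilde\Gamma_k$ is characterized by $S_1,\ldots,S_k>0$) with careful bookkeeping of the top-eigenvalue products, in order to extract a constant depending only on $n$ and $k$. A further technical subtlety is verifying that $(\lambda|j)$ remains in the admissible sub-cone $\tilde\Gamma_{k-1}$ at least for indices $j$ among the top $k-1$, so that (a) may be reinvoked at the reduced dimension.
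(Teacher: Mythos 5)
Your plan for part (a) is essentially the paper's: iterate the expansion $S_s(\la)=\la_1S_{s-1}(\la|1)+S_s(\la|1)$ and discard the positive remainders $S_s(\la|1),S_{s-1}(\la|12),\dots$, whose positivity comes from $\la\in\tilde\Gamma_k\subset\Gamma_s$; phrasing this as an induction rather than a telescoping chain is only cosmetic.

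Part (b) is where your proposal has a genuine gap. The reduction to $S_k(\la|j)\le(1-\theta)S_k(\la)$ is fine, but the mechanism you offer does not close it. First, ``the lower bound for $S_k(\la)$ coming from part (a)'' does not exist: (a) is stated and proved only for $s<k$; and in any case a \emph{lower} bound on $S_k$ points the wrong way --- to get $\la_jS_k^{jj}\ge\theta S_k$ you need an \emph{upper} bound on $S_k$ played against a lower bound on $\la_jS_k^{jj}$. The paper supplies exactly these two halves. For the lower half it splits on the sign of $S_k(\la|j)$: if $S_k(\la|j)\le0$, the identity $\la_jS_k^{jj}=S_k-S_k(\la|j)$ gives \eqref{2.53} with $\theta=1$; if $S_k(\la|j)>0$, Lemma \ref{lem2.3a} promotes $(\la|j)$ into $\tilde\Gamma_k$ (membership in $\tilde\Gamma_{k-1}$, which is all you propose to verify, is not enough to invoke (a) with $s=k-1$), and then (a) applied to the truncated vector yields $S_k^{jj}(\la)=S_{k-1}(\la|j)>(\la_1\cdots\la_k+\al\la_1\cdots\la_{k-1})/\la_j$. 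For the upper half one needs the quantitative bound $\theta S_k(\la)\le\la_1\cdots\la_k+\al\la_1\cdots\la_{k-1}$ with $\theta=\theta(n,k)$, which the paper extracts from $\la_k>0$ and $\la_k+\cdots+\la_n+\al>0$ via the standard comparison of $\sigma_k$ and $\sigma_{k-1}$ with the product of the top eigenvalues on the Garding cone. This is the step your sketch replaces with an appeal to Newton--MacLaurin inequalities, which compare $\sigma_m$ across degrees but do not by themselves control $S_k$ by the product $\la_1\cdots\la_k+\al\la_1\cdots\la_{k-1}$. Without the sign dichotomy, the promotion of $(\la|j)$ into $\tilde\Gamma_k$, and this product upper bound, the ``asymmetry'' you describe does not produce a uniform $\theta(n,k)$.
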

\begin{proof}
(a) Using $\la\in\tilde\Gamma_k\subset\Gamma_s$, we have
$$
\la_1>0, ~\la_2>0, \cdots, ~\la_s>0,
$$
and
$$
S_s(\la|1)>0, ~S_{s-1}(\la|12)>0, \cdots, ~S_{1}(\la|12\cdots s)>0.
$$
Using the above inequalities, we get
\begin{align*}
S_s(\la)=&\la_1S_{s-1}(\la|1)+S_s(\la|1)\\
>&\la_1S_{s-1}(\la|1)=\la_1\la_2S_{s-2}(\la|12)+\la_1S_{s-1}(\la|12)\\
>&\la_1\la_2S_{s-2}(\la|12)=\cdots\\
>&\la_1\la_2\cdots\la_{s-1}(\la_s+\al).
\end{align*}
\par
(b) If $S_k(\la|j)\leq 0$, obviously we have
$$
S_k^{jj}(\la)=\dfrac{S_k(\la)-S_k(\la|j)}{\la_j}\geq\dfrac{S_k(\la)}{\la_j}.
$$
If $S_k(\la|j)> 0$, then $(\la|j)\in\tilde\Gamma_k$ by Lemma
\ref{lem2.3a}. Using \eqref{2.54}, we obtain
$$
S_k^{jj}(\la)>\dfrac{\la_1\cdots\la_k+\al\la_1\cdots\la_{k-1}}{\la_j}.
$$
On the other hand, $(\la|j)\in\tilde\Gamma_k$ implies $\la_k>0$ and
$$
\la_k+\cdots+\la_n+\al>0.
$$
So there exists some constant $\theta$ depending on $n,k$, such that
$$
\theta S_k(\la)\leq \la_1\cdots\la_k+\al\la_1\cdots\la_{k-1}.
$$
Then we obtain \eqref{2.53}.
\end{proof}

\begin{lemma} \label{lem2.5}
For $\lambda\in\tilde\Gamma_k$, we have
$S_k^2(\la)-S_{k-1}(\la)S_{k+1}(\la)\geq0.$
\end{lemma}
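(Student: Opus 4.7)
The plan is to reduce the inequality to the classical Newton inequality in $n+1$ real variables by augmenting $\lambda$ with $\alpha$. The starting observation is the elementary identity
\[
\sigma_j(\lambda_1,\ldots,\lambda_n,\alpha) = \sigma_j(\lambda) + \alpha\,\sigma_{j-1}(\lambda),
\]
obtained by splitting the sum defining $\sigma_j$ according to whether the extra index is selected. Applied for $j=k-1,k,k+1$, this rewrites $S_{k-1}(\lambda)$, $S_k(\lambda)$, $S_{k+1}(\lambda)$ uniformly as $\sigma_{k-1}$, $\sigma_k$, $\sigma_{k+1}$ evaluated at the augmented vector $\mu:=(\lambda_1,\ldots,\lambda_n,\alpha)\in\mathbb{R}^{n+1}$. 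In other words, $S_k(\lambda)$ is literally the $k$-th elementary symmetric function of $(\lambda,\alpha)$.

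With this substitution, the desired inequality becomes
\[
\sigma_k(\mu)^2 \geq \sigma_{k-1}(\mu)\,\sigma_{k+1}(\mu), \qquad \mu\in\mathbb{R}^{n+1},
\]
which is a special case of the classical Newton inequality for elementary symmetric polynomials in real variables. A self-contained justification works with the Maclaurin-normalized quantities $p_j:=\sigma_j(\mu)/\binom{n+1}{j}$, which satisfy $p_k^2\geq p_{k-1}p_{k+1}$ via Rolle's theorem applied to the real-rooted polynomial $\prod_{i=1}^{n+1}(x+\mu_i)$ and its derivatives. Since $\binom{n+1}{k}^2 > \binom{n+1}{k-1}\binom{n+1}{k+1}$, this yields $\sigma_k^2\geq \sigma_{k-1}\sigma_{k+1}$ whenever $\sigma_{k-1}\sigma_{k+1}\geq 0$; when $\sigma_{k-1}\sigma_{k+1}<0$ the inequality is trivial since $\sigma_k^2\geq 0$.

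There is essentially no obstacle in this argument once the augmentation trick is spotted. The cone hypothesis $\lambda\in\tilde{\Gamma}_k$ plays no role in the inequality itself — it is merely the natural admissibility class in which the operator $S_k$ is elliptic elsewhere in the paper; the proof shows that the inequality is actually valid for all $\lambda\in\mathbb{R}^n$ and all $\alpha\in\mathbb{R}$. An alternative bare-hands route would be to expand $S_k^2-S_{k-1}S_{k+1}$ in terms of the classical $\sigma_j$, producing the three terms $(\sigma_k^2-\sigma_{k-1}\sigma_{k+1})+\alpha(\sigma_k\sigma_{k-1}-\sigma_{k-2}\sigma_{k+1})+\alpha^2(\sigma_{k-1}^2-\sigma_{k-2}\sigma_k)$ and trying to show each is nonnegative; the first and third terms are standard Newton-type inequalities, but the middle term is exactly the kind of mixed inequality that the augmentation trick bypasses cleanly, so I would prefer the augmented-variable proof.
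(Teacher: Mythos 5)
Your proof is correct, and it takes a genuinely different route from the paper's. The paper argues by cases on the sign of $S_{k+1}(\lambda)$: when $S_{k+1}\le 0$ the claim is immediate from $S_k,S_{k-1}>0$ on $\tilde\Gamma_k$, and when $S_{k+1}>0$ it invokes Lemma \ref{lem2.3a} to place $\lambda$ in $\Gamma_k$, expands $S_k^2-S_{k-1}S_{k+1}$ into the three $\sigma$-blocks you mention in your last paragraph, and disposes of them using Newton's inequality together with the mixed inequality $\sigma_k\sigma_{k-1}\ge\sigma_{k-2}\sigma_{k+1}$ — the last of which does use the cone hypothesis. Your augmentation identity $S_j(\lambda)=\sigma_j(\lambda_1,\dots,\lambda_n,\alpha)$ collapses all of this into a single application of Newton's inequality in $n+1$ real variables, with the standard reduction from the normalized form $p_k^2\ge p_{k-1}p_{k+1}$ to the unnormalized one via log-concavity of the binomial coefficients; all steps check out. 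What your approach buys is both economy (no case split, no auxiliary lemma on $\tilde\Gamma_{k+1}$, no mixed Newton-type inequality) and generality (the inequality holds for every $\lambda\in\mathbb{R}^n$, not just on $\tilde\Gamma_k$); what the paper's computation buys is that it stays entirely inside the $\sigma_j$-calculus already set up in Section 2 and makes visible exactly which classical inequalities are being combined. Either proof would be acceptable here.
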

\begin{proof}
Since $\lambda\in\tilde\Gamma_k$, $S_k(\la)>0, S_{k-1}(\la)>0$. If
$S_{k+1}(\la)\leq 0$, the Lemma obviously holds.
\par
If $S_{k+1}(\la)> 0$, Lemma \ref{lem2.3a} implies
$\la\in\tilde\Gamma_{k+1}\subset\Gamma_k$. Thus
\begin{align*}
 S_k^2-S_{k-1}S_{k+1}
       =&(\sigma_k+\al\sigma_{k-1})^2-(\sigma_{k-1}+\al\sigma_{k-2})(\sigma_{k+1}+\al\sigma_{k})\\
       =&\sigma_k^2+\alpha^2\sigma_{k-1}^2+\alpha\sigma_k\sigma_{k-1}-(\sigma_{k-1}\sigma_{k+1}+\alpha^2\sigma_{k-2}\sigma_{k}+\alpha\sigma_{k-2}\sigma_{k+1})\\
       \geq&0.
\end{align*}
In the above inequality, we used Newton's inequality and the
following inequality which obtained from Newton's inequality
\begin{align*}
\sigma_k\sigma_{k-1}\geq&\sigma_{k-2}\sigma_{k+1}.
\end{align*}
\end{proof}

\begin{lemma}\label{lem2.4}
Assume that $\lambda=(\lambda_1,\cdots,\lambda_n)\in\Gamma_k$,
$1\leq k\leq n$, $\lambda_1\geq\cdots\geq\lambda_n$, and $S_k\leq
N_0$ for some positive constant $N_0>0$. Then
\par
(a) There exists some positive constant $K_0$, such that
\begin{equation}\label{2.4}
\lambda_{k-1}\leq(\dfrac{N_0}{\al})^{\frac{1}{k-1}},~~~~~~\lambda_n>-K_0;
\end{equation}
\par
(b) Denote $\kappa_i=\lambda_i+K_0$. For any $1\leq i\leq n$, we
have
\begin{equation}\label{2.5}
2\kappa_1^{k+2}S_k^{11}(\la)\geq\kappa_i^{k+2}S_k^{ii}(\la),
\end{equation}
if $\lambda_1$ is sufficiently large;
\par
(c) For any $\varepsilon_0>0$, we have
\begin{equation}\label{2.51}
S_k(\la)>(1-\varepsilon_0)\lambda_1S_k^{11}(\la),
\end{equation}
if $\lambda_1$ is sufficiently large;
\par
(d) For any $1\leq i\leq n$, there exists some positive constant
$C_0$, such that
\begin{equation}\label{2.52}
C_0S_k(\la)\geq\lambda_iS_k^{ii}(\la).
\end{equation}
\end{lemma}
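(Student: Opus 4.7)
I would prove the four statements in order, since each builds on the previous ones.

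For part (a), I start from the observation that $\lambda\in\Gamma_k$ forces $\sigma_k(\lambda)>0$, so the hypothesis $S_k\le N_0$ immediately yields $\sigma_{k-1}(\lambda)\le N_0/\alpha$. Combined with the classical fact that $\lambda\in\Gamma_k$ with $\lambda_1\ge\cdots\ge\lambda_n$ forces $\lambda_1,\dots,\lambda_{k-1}>0$ and $\sigma_{k-1}(\lambda)\ge \lambda_1\cdots\lambda_{k-1}\ge \lambda_{k-1}^{k-1}$, this gives the stated upper bound on $\lambda_{k-1}$. For the lower bound on $\lambda_n$ I would use the decomposition $\sigma_{k-1}(\lambda)=\lambda_n\sigma_{k-2}(\lambda|n)+\sigma_{k-1}(\lambda|n)$, noting that $(\lambda|n)\in\Gamma_{k-1}$ so $\sigma_{k-1}(\lambda|n)\ge 0$ and $\sigma_{k-2}(\lambda|n)>0$. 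This rearranges to $-\lambda_n\sigma_{k-2}(\lambda|n)\le N_0/\alpha$, and a Newton--Maclaurin type lower bound on $\sigma_{k-2}(\lambda|n)$ inside the already controlled cone closes the argument.

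For part (b), using $\kappa_i=\lambda_i+K_0$ with $\kappa_1\ge\cdots\ge\kappa_n>0$ and $\kappa_1\to\infty$, I would split indices into those forced to be bounded by (a) (i.e., $i\ge k-1$, where $\kappa_i\le M$ for some $M(N_0,n,k,\alpha)$) and the possibly large indices $i\le k-2$. For the bounded range, $S_k^{ii}=S_{k-1}(\lambda|i)$ grows at most linearly in $\lambda_1$, so $\kappa_i^{k+2}S_k^{ii}\le CM^{k+2}\lambda_1$, while $\kappa_1^{k+2}S_k^{11}\gtrsim \kappa_1^{k+2}$ since $S_k^{11}=S_{k-1}(\lambda|1)$ is bounded below using $(\lambda|1)\in\Gamma_{k-1}$; hence the inequality is trivial once $\lambda_1$ is large. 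For the remaining middle indices I would expand $S_k^{ii}$ and $S_k^{11}$ in powers of $\lambda_1$, using $\kappa_i\le\kappa_1$ to match the leading terms and the exponent $k+2$ (rather than $k$ or $k+1$) to absorb the lower-order corrections coming from the bounded tail $\lambda_{k-1},\dots,\lambda_n$.

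For part (c), I would apply identity (iii) to write $S_k=\lambda_1S_k^{11}+S_k(\lambda|1)$, reducing the claim to $|S_k(\lambda|1)|\le\varepsilon_0\lambda_1S_k^{11}$ for $\lambda_1$ large. Since $(\lambda|1)\in\Gamma_{k-1}$, the constraint $\lambda_1\cdots\lambda_{k-1}\le N_0/\alpha$ forces $\lambda_2\cdots\lambda_{k-1}=O(1/\lambda_1)$, so $\sigma_k(\lambda|1)$ and $\sigma_{k-1}(\lambda|1)$ are both bounded while $\lambda_1S_k^{11}\to\infty$. For part (d), when $\lambda_i\le 0$ the inequality is trivial since $S_k,S_k^{ii}>0$; when $\lambda_i>0$ I would combine (b) and (c), taking $\varepsilon_0=\tfrac12$ in (c) to get $S_k\ge\tfrac12\lambda_1S_k^{11}$, then applying (b) with $\kappa_i\le\kappa_1$ to bound $\lambda_iS_k^{ii}\le 2\kappa_1S_k^{11}\le CS_k$. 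The $\lambda_1$-bounded regime is handled trivially since all quantities are then controlled.

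The principal obstacle is the middle-index analysis in (b): neither $\lambda_i$ nor $\kappa_i/\kappa_1$ is controlled a priori for $2\le i\le k-2$, so the comparison $\kappa_i^{k+2}S_k^{ii}\le 2\kappa_1^{k+2}S_k^{11}$ must exploit the product bound $\lambda_1\cdots\lambda_{k-1}\le N_0/\alpha$ rather delicately, and checking that the specific exponent $k+2$ is adequate requires a careful bookkeeping of leading and subleading terms in the expansions of $S_k^{ii}$.
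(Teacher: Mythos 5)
Your proposal has the right skeleton for (a) and (c) but leaves genuine gaps in (b), (c) and (d). For (a), the upper bound on $\lambda_{k-1}$ is exactly the paper's argument; for the lower bound on $\lambda_n$ the paper simply uses $\lambda_k+\cdots+\lambda_n>0$ (valid in $\Gamma_k$) together with $\lambda_k\le\lambda_{k-1}\le(N_0/\alpha)^{1/(k-1)}$, which is cleaner and avoids the problem that your quantity $\sigma_{k-2}(\lambda|n)$ has no uniform positive lower bound. The serious issue is (b): you correctly identify the middle indices as the crux but do not resolve them, and your preliminary claim that $S_k^{ii}$ grows at most linearly in $\lambda_1$ for the bounded indices is false (for $i\ge k$ the term $\lambda_1\lambda_2\lambda_j$ with $j\ge k$ can be of order $\lambda_1^2$ when $\lambda_2\sim\lambda_1$). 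The paper's split is different: if $\lambda_i\ge\lambda_1^{1/(k+2)}$, the identities $S_k^{11}=\lambda_iS_k^{11,ii}+S_{k-1}(\lambda|1i)$ and $S_k^{ii}=\lambda_1S_k^{11,ii}+S_{k-1}(\lambda|1i)$ yield $2\kappa_1^2S_k^{11}-\kappa_i^2S_k^{ii}\ge(\kappa_1^2-\kappa_i^2)\bigl[S_k^{11}+S_{k-1}(\lambda|1i)\bigr]$, and the bracket is positive because $S_k^{11}\ge\alpha\lambda_2\cdots\lambda_{k-1}$ while the negative part of $S_{k-1}(\lambda|1i)$ carries the bounded factor $\lambda_k\lambda_n/\lambda_i$; if instead $\lambda_i<\lambda_1^{1/(k+2)}$, then $\kappa_i^{k+2}<2\kappa_1$, and the large exponent collapses the claim to $\kappa_1^{k+1}S_k^{11}\ge S_k^{ii}$, which follows from $\lambda_1S_k^{11}\ge\theta S_k$ (Lemma \ref{lem2.4a}). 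This identity-based mechanism is the missing idea.

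For (c), the dichotomy ``$S_k(\lambda|1)$ bounded versus $\lambda_1S_k^{11}\to\infty$'' is not valid: $\lambda_1S_k^{11}$ need not tend to infinity (e.g.\ $\lambda=(\lambda_1,1/\lambda_1,\dots,1/\lambda_1)$ keeps it bounded), and both $\lambda_1S_k^{11}$ and $\sigma_k(\lambda|1)$ carry the common, possibly vanishing, factor $\lambda_2\cdots\lambda_{k-1}$. The correct comparison extracts that factor: $\varepsilon_0\lambda_1S_k^{11}+\sigma_k(\lambda|1)\ge\lambda_2\cdots\lambda_{k-1}\bigl(\varepsilon_0\alpha\lambda_1+C_{n-1}^k\lambda_k\lambda_n\bigr)$, which is nonnegative for large $\lambda_1$ since $|\lambda_k\lambda_n|<K_0^2$. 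For (d), your deduction of $\lambda_iS_k^{ii}\le2\kappa_1S_k^{11}$ from (b) goes the wrong way: (b) gives $\kappa_iS_k^{ii}\le2(\kappa_1/\kappa_i)^{k+1}\kappa_1S_k^{11}$ and the factor $(\kappa_1/\kappa_i)^{k+1}$ is unbounded; moreover (b) and (c) hold only for $\lambda_1$ large, whereas (d) must hold unconditionally, and the bounded-$\lambda_1$ regime is not trivial because $S_k$ has no positive lower bound. The paper proves (d) directly in one line via $\lambda_iS_k^{ii}=S_k-S_k(\lambda|i)$, giving $C_0S_k-\lambda_iS_k^{ii}=(C_0-1)S_k+S_k(\lambda|i)>\lambda_1\cdots\lambda_{k-1}\bigl[(C_0-1)\alpha+C_n^k\lambda_n\bigr]\ge0$ once $C_0>1+K_0C_n^k/\alpha$.
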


\begin{proof}
(a) Since $\lambda\in\Gamma_k$, by Lemma 11 in \cite{RW2}, we have
\begin{equation*}
\begin{aligned}
 N_0\geq S_k=\sigma_k+\alpha\sigma_{k-1}>&\alpha\sigma_{k-1}
    \geq\alpha\lambda_1\cdots\lambda_{k-1}
    >\alpha\lambda_{k-1}^{k-1}.
\end{aligned}
\end{equation*}
Thus, $\lambda_{k-1}\leq (\dfrac{N_0}{\al})^{\frac{1}{k-1}}$. Using
$\lambda_k\geq0$, we have
$$\lambda_n>-(n-k)\lambda_k\geq -(n-k)\lambda_{k-1}>-K_0.$$

\par
(b) From \eqref{2.4}, we get $\kappa_i>0$. We divide into two cases
to prove \eqref{2.5}.
\par
\textbf{Case 1:} $\lambda_i\geq\lambda_1^{1/(k+2)}$. Since
\begin{align}\label{2.6}
2\kappa_1^2S_k^{11}-\kappa_i^2S_k^{ii}=&\kappa_1^2S_k^{11}+\kappa_1^2[\lambda_iS_k^{11,ii}+S_{k-1}(\lambda|1i)]\\
&-\kappa_i^2[\lambda_1S_k^{11,ii}+S_{k-1}(\lambda|1i)]\nonumber\\
    \geq&\kappa_1^2S_k^{11}+(\kappa_1^2-\kappa_i^2)S_{k-1}(\lambda|1i)\nonumber\\
    \geq&(\kappa_1^2-\kappa_i^2)[S_k^{11}+S_{k-1}(\lambda|1i)].
    \nonumber
\end{align}
Using $\lambda\in\Gamma_k$, we have
\begin{equation*}
S_{k-1}(\lambda|1i)=\sigma_{k-1}(\lambda|1i)+\alpha\sigma_{k-2}(\lambda|1i)\geq\sigma_{k-1}(\lambda|1i)>C_{n-2}^{k-1}\dfrac{\lambda_2\cdots\lambda_k\lambda_n}{\lambda_i},
\end{equation*}
and
\begin{equation}\label{e2.10}
S_k^{11}=\sigma_{k-1}(\lambda|1)+\alpha\sigma_{k-2}(\lambda|1)\geq\alpha\lambda_2\cdots\lambda_{k-1}.
\end{equation}
In view of \eqref{2.4}, we have $|\lambda_k\lambda_n|<K_0^2$. Using
the above two formulas, we have
\begin{equation}\label{2.7}
S_k^{11}+S_{k-1}(\lambda|1i)>\lambda_2\cdots\lambda_{k-1}(\alpha+C_{n-2}^{k-1}\dfrac{\lambda_k\lambda_n}{\lambda_i})>0,
\end{equation}
if $\lambda_1$ is sufficiently large. Combining \eqref{2.7} with
\eqref{2.6}, we can get \eqref{2.5}.
\par
\textbf{Case 2:} $\lambda_i<\lambda_1^{1/(k+2)}$. In this case,
$2\kappa_1>\kappa_i^{k+2}$, using \eqref{2.53}, we have
\begin{align*}
\kappa_1^{k+1}S_k^{11}>\kappa_1^k\lambda_1S_k^{11}\geq\kappa_1^kc_0S_k\geq\kappa_1c_0S_k\lambda_1^{k-1}\geq
S_k^{ii},
\end{align*}
when $\lambda_1$ is sufficiently large. Then we get
$$2\kappa_1^{k+2}S_k^{11}\geq\kappa_i^{k+2}S_k^{ii}.$$
\par
(c) Using \eqref{e2.10} and
$$\sigma_k(\lambda|1)>C_{n-1}^k\lambda_2\cdots\lambda_k\lambda_n,$$
we get
\begin{align*}
  S_k-(1-\varepsilon_0)\lambda_1S_k^{11}
    =&\varepsilon_0\lambda_1S_k^{11}+S_k(\lambda|1)\\
    >&\varepsilon_0\lambda_1S_k^{11}+\sigma_k(\lambda|1)\\
    \geq&\lambda_2\cdots\lambda_{k-1}(\varepsilon_0\lambda_1\alpha+C_{n-1}^k\lambda_k\lambda_n).
\end{align*}
By \eqref{2.4}, we know $|\lambda_k\lambda_n|<K_0^2$. Thus when
$\lambda_1$ is sufficiently large, the above formula is
non-negative.
\par
(d) Since $\lambda\in\Gamma_k$, we have
$$S_k=\sigma_k+\alpha\sigma_{k-1}\geq\alpha\lambda_1\cdots\lambda_{k-1},$$
and
$$\sigma_k(\lambda|i)>C_n^k\lambda_1\cdots\lambda_{k-1}\lambda_n,$$
which implies
\begin{equation*}
\begin{aligned}
  C_0S_k-\lambda_iS_k^{ii}
    =&(C_0-1)S_k+S_k(\lambda|i)\\
    =&(C_0-1)S_k+\sigma_k(\lambda|i)+\alpha\sigma_{k-1}(\lambda|i)\\
    >&\lambda_1\cdots\lambda_{k-1}[(C_0-1)\alpha+C_n^k\lambda_n].
\end{aligned}
\end{equation*}
By \eqref{2.4}, when $C_0>1+\dfrac{K_0C_n^k}{\al}$, the above
formula is non-negative.
\end{proof}

\section{Pogorelov type $C^2$ estimates for $k-1$ convex solutions}
\par
In this section, we prove theorem \ref{th1.1}. First, for the case
of $k=2,3$, we establish Pogorelov type $C^2$ estimates for $k-1$
convex solutions of the Dirichlet problem \eqref{1.1}, and then we
give a proof for the case that $f^{\frac{1}{k}}(x,u,p)$ is convex
with respect to $p$.
\par
{\bf Proof of \eqref{1.3}:} Let $\lambda_1=\lambda_1(x)$ denote the
biggest eigenvalue of $D^2u$. Consider the test function:
$$\phi(x)=\lambda_1(-u)^\beta \exp\{\dfrac{\varepsilon}{2}|Du|^2+\dfrac{a}{2}|x|^2\},$$
where $\beta, \varepsilon,a$ are undetermined positive constants.
Using the trick of Brendle-Choi-Daskalopoulos~\cite{BCD,Yang},
assume test function $\phi(x)$ obtains its maximum at point $x_0$.
Define a function $\psi(x)$:
$$\psi(x)=\phi(x_0)(-u)^{-\beta} \exp\{-\dfrac{\varepsilon}{2}|Du|^2-\dfrac{a}{2}|x|^2\}.$$
Since
\begin{align*}
\phi(x_0)=&\psi(x)(-u)^{\beta}\exp\{\dfrac{\varepsilon}{2}|Du|^2+\dfrac{a}{2}|x|^2\}\\
\geq&\phi(x)= \lambda_1(-u)^\beta
\exp\{\dfrac{\varepsilon}{2}|Du|^2+\dfrac{a}{2}|x|^2\},
\end{align*}
we have $\psi(x)\geq\lambda_1(x)$ and $\psi(x_0)=\lambda_1(x_0)$. By
rotating the coordinate, we assume that $(u_{ij})$ is a diagonal
matrix at $x_0$, and
$u_{ii}=\lambda_i,~\lambda_1\geq\cdots\geq\lambda_n$.
\par
Let $\mu$ denote the multiplicity of the biggest eigenvalue at
$x_0$, namely,
$\lambda_1=\cdots=\lambda_\mu>\lambda_{\mu+1}\geq\cdots\geq\lambda_n$.
Then from Brendle-Choi-Daskalopoulos result(Lemma 5 in \cite{BCD}),
we have the followings exist at $x_0$:
\begin{equation*}
\psi=\la_1=u_{11},
\end{equation*}
\begin{equation*}
u_{kji}=\psi_{i}\delta_{kj},\quad 1\leq k,j\leq\mu,
\end{equation*}
\begin{equation*}
\psi_{ii}\geq
u_{11ii}+2\dsum_{j>\mu}\dfrac{1}{\la_1-\la_j}u_{1ji}^2,
\end{equation*}

Now we restrict all calculations at point $x_0$. Since the function
$$\tilde \phi=\psi(x)(-u)^\beta \exp\{\dfrac{\varepsilon}{2}|Du|^2+\dfrac{a}{2}|x|^2\}=\phi(x_0)$$
has constant value, at $x_0$, $(\ln\tilde\phi)_i=0,\quad
(\ln\tilde\phi)_{ii}=0$, using the above formulas of $\psi$, we
obtain

\begin{equation}\label{3.1}
\dfrac{\psi_i}{\psi}+\dfrac{\beta u_i}{u}+\varepsilon
u_iu_{ii}+ax_i=\dfrac{u_{11i}}{u_{11}}+\dfrac{\beta
u_i}{u}+\varepsilon u_iu_{ii}+ax_i=0,
\end{equation}
and
\begin{align*}
0=&\dfrac{\psi_{ii}}{\psi}-\dfrac{\psi_{i}^2}{\psi^2}+\dfrac{\beta
u_{ii}}{u}-\dfrac{\beta u_i^2}{u^2}+\displaystyle\sum_s
\varepsilon u_su_{sii}+\varepsilon u_{ii}^2+a\\
 \geq&\dfrac{\beta u_{ii}}{u}-\dfrac{\beta
u_i^2}{u^2}+\dfrac{u_{11ii}}{u_{11}}+2\displaystyle\sum_{j>\mu}\dfrac{u_{1ji}^2}{\lambda_1(\lambda_1-\lambda_j)}-\dfrac{u_{11i}^2}{u_{11}^2}+\displaystyle\sum_s
\varepsilon u_su_{sii}+\varepsilon u_{ii}^2+a.
\end{align*}
In the above inequality, contracting with $S_k^{ii}$, we have
\begin{align}\label{3.2}
0\geq&\dfrac{\beta S_k^{ii}u_{ii}}{u}-\dfrac{\beta
S_k^{ii}u_i^2}{u^2}+\dfrac{S_k^{ii}u_{11ii}}{u_{11}}+2\dsum_{j>\mu}\dfrac{S_k^{ii}u_{1ji}^2}{\lambda_1(\lambda_1-\lambda_j)}-\dfrac{S_k^{ii}u_{11i}^2}{u_{11}^2}\\
& +\dsum_s\varepsilon u_sS_k^{ii}u_{sii}+\varepsilon
S_k^{ii}u_{ii}^2+a\dsum_iS_k^{ii}.\nonumber
\end{align}
\par
At $x_0$, differentiating equation \eqref{1.1} twice, we have
\begin{equation}\label{3.3}
S_k^{ii}u_{iij}=f_j+f_uu_j+f_{p_j}u_{jj},
\end{equation}
and
\begin{equation}\label{3.4}
S_k^{ii}u_{iijj}+S_k^{pq,rs}u_{pqj}u_{rsj}\geq-C-Cu_{11}+f_{p_jp_j}u_{jj}^2+\dsum_sf_{p_s}u_{sjj}.
\end{equation}
Substituting \eqref{3.4} into \eqref{3.2}, we have
\begin{align}\label{3.5}
0\geq& \dfrac{\beta S_k^{ii}u_{ii}}{u}-\dfrac{\beta S_k^{ii}u_i^2}{u^2}+\dfrac{1}{u_{11}}[-C-Cu_{11}+f_{p_1p_1}u_{11}^2+\dsum_sf_{p_s}u_{s11}-K(S_k)_1^2\\
       &+K(S_k)_1^2-S_k^{pq,rs}u_{pq1}u_{rs1}]+2\dsum_{j>\mu}\dfrac{S_k^{11}u_{11j}^2}{\lambda_1(\lambda_1-\lambda_j)}-\dfrac{S_k^{ii}u_{11i}^2}{u_{11}^2}\nonumber\\
       &+\dsum_s\varepsilon u_sS_k^{ii}u_{sii}+\varepsilon S_k^{ii}u_{ii}^2+a\dsum_iS_k^{ii},\nonumber
\end{align}
where $K$ is a positive constant which will be determined later.
\par
Combining \eqref{3.1} with \eqref{3.3}, we obtain
\begin{equation*}
\dfrac{1}{u_{11}}\dsum_sf_{p_s}u_{s11}+\dsum_s\varepsilon
u_sS_k^{ii}u_{sii}\geq-\dsum_s\dfrac{\beta u_sf_{p_s}}{u}-C.
\end{equation*}
By Lemma \ref{lem2.2},
\begin{equation*}
\begin{aligned}
 -S_k^{pq,rs}u_{pq1}u_{rs1}& =-S_k^{pp,qq}u_{pp1}u_{qq1}+\dsum_{p\neq q}S_k^{pp,qq}u_{pq1}^2\\
                                & \geq-S_k^{pp,qq}u_{pp1}u_{qq1}+2\dsum_{j\neq1}S_k^{11,jj}u_{11j}^2.
\end{aligned}
\end{equation*}
Let $K=\dfrac{k-1}{kS_k}$. Using Lemma \ref{lem2.1}, we have
\begin{equation*}
K(S_k)_1^2-S_k^{pp,qq}u_{pp1}u_{qq1}\geq0.
\end{equation*}
Because $\lambda(D^2u)\in\Gamma_{k-1}\subset\Gamma_1$, that is
$\Delta u=\sigma_1(D^2u)>0$,  so we have $u<0$ at $x_0$. By (v) in
section 2,
$$
\dfrac{\beta
S_k^{ii}u_{ii}}{u}=\dfrac{\beta}{u}(kS_k-\al\sigma_{k-1})>\dfrac{\beta
kS_k}{u}.
$$

Using the above inequalities, \eqref{3.5} becomes
\begin{align}\label{3.6}
-\dfrac{C}{u}\geq& -\dfrac{\beta S_k^{ii}u_i^2}{u^2}+\dfrac{2}{u_{11}}\dsum_{j\neq 1}S_k^{11,jj}u_{11j}^2+2\dsum_{j>\mu}\dfrac{S_k^{11}u_{11j}^2}{\lambda_1(\lambda_1-\lambda_j)}\\
                  &-\dfrac{S_k^{ii}u_{11i}^2}{u_{11}^2}+\varepsilon S_k^{ii}u_{ii}^2+a\dsum_iS_k^{ii}+\Big(f_{p_1p_1}-\dfrac{(k-1)f_{p_1}^2}{kf}\Big)u_{11}-C.\nonumber
\end{align}

\par
Next, we divide into two cases to deal with \eqref{3.6}.
\par
{\bf Case A:} $\lambda_n\geq-\dfrac{\lambda_1}{2}$. For index
$i>\mu$, using \eqref{3.1}, we have
\begin{equation}\label{3.7}
-\dfrac{\beta u_i^2}{u^2}\geq -\dfrac{2u_{11i}^2}{\beta
u_{11}^2}-\dfrac{4}{\beta}(\varepsilon
u_iu_{ii})^2-\dfrac{4}{\beta}(ax_i)^2.
\end{equation}
For any $j>\mu$ and $\beta>6$, we have
\begin{align}\label{3.8}
\dfrac{2}{u_{11}}&S_k^{11,jj}u_{11j}^2+2\dfrac{S_k^{11}}{\lambda_1(\lambda_1-\lambda_j)}u_{11j}^2-\dfrac{2+\beta}{\beta}\dfrac{S_k^{jj}}{u_{11}^2}u_{11j}^2\\
=&\dfrac{(\beta-2)\lambda_1+(\beta+2)\lambda_j}{\beta(\lambda_1-\lambda_j)}\dfrac{S_k^{jj}u_{11j}^2}{\lambda_1^2}\geq0.\nonumber
\end{align}
Substituting \eqref{3.7}, \eqref{3.8} into \eqref{3.6}, we get
\begin{align}\label{3.9}
-\dfrac{C}{u}\geq& -\dsum_{i=1}^\mu\dfrac{\beta
S_k^{ii}u_i^2}{u^2}-\dsum_{i=1}^\mu\dfrac{S_k^{ii}u_{11i}^2}{u_{11}^2}+\varepsilon
S_k^{ii}u_{ii}^2-\dfrac{4}{\beta}\dsum_{i>\mu}S_k^{ii}(\varepsilon
u_iu_{ii})^2\\
&-\dfrac{4}{\beta}\dsum_{i>\mu}S_k^{ii}(ax_i)^2+a\dsum_iS_k^{ii}+\Big(f_{p_1p_1}-\dfrac{(k-1)f_{p_1}^2}{kf}\Big)u_{11}-C.\nonumber
\end{align}
For index $i\leq\mu$, using \eqref{3.1}, we have
\begin{equation}\label{3.10}
-\dfrac{u_{11i}^2}{u_{11}^2}\geq-2(\dfrac{\beta
u_i}{u})^2-4(\varepsilon u_iu_{ii})^2-4(ax_i)^2.
\end{equation}
Substituting \eqref{3.10} into \eqref{3.9}, we get
\begin{align}\label{3.11}
-\dfrac{C}{u}&+\dsum_{i=1}^\mu\dfrac{(\beta+2\beta^2)S_k^{ii}u_i^2}{u^2}\\
 &\geq\varepsilon S_k^{ii}u_{ii}^2-4\dsum_{i=1}^\mu S_k^{ii}(\varepsilon u_iu_{ii})^2-4\dsum_{i=1}^\mu S_k^{ii}(ax_i)^2-\dfrac{4}{\beta}\dsum_{i>\mu}S_k^{ii}(\varepsilon u_iu_{ii})^2\nonumber\\
      &-\dfrac{4}{\beta}\dsum_{i>\mu}S_k^{ii}(ax_i)^2+a\dsum_iS_k^{ii}+\Big(f_{p_1p_1}-\dfrac{(k-1)f_{p_1}^2}{kf}\Big)u_{11}-C.\nonumber
\end{align}
We choose $\varepsilon$, such that
$$\varepsilon>8\varepsilon^2\displaystyle\max_\Omega|Du|^2.$$
Note that $i\leq \mu$, $u_{ii}=u_{11}=\lambda_1$, thus for the above
selected $\varepsilon$,
\begin{equation}\label{3.12}
\dfrac{3}{4}\varepsilon S_k^{ii}u_{ii}^2-4\dsum_{i=1}^\mu
S_k^{ii}(\varepsilon u_iu_{ii})^2-4\dsum_{i=1}^\mu S_k^{ii}(ax_i)^2
      -\dfrac{4}{\beta}\dsum_{i>\mu}S_k^{ii}(\varepsilon
      u_iu_{ii})^2>0,
\end{equation}
when $\lambda_1$ is sufficiently large. By (iv) in section 2, for
$k=2,3$, when $\lambda\in\tilde\Gamma_k$,
\begin{equation}\label{3.13}
\begin{aligned}
&\dsum_{i=1}^nS_2^{ii}=(n-1)\sigma_1+n\al>(n-1)\sigma_1,\\
&\dsum_{i=1}^nS_3^{ii}=(n-2)\sigma_2+(n-1)\al\sigma_1>(n-1)\al\sigma_1.
\end{aligned}
\end{equation}
Note that, when $\lambda\in\tilde\Gamma_2$,
$$
S_2^{11}=\lambda_2+\cdots+\lambda_n+\alpha>0,
$$
which implies $\sigma_1>\lambda_1-\al$. Taking $a$ sufficiently
large, we have
\begin{equation}\label{3.14}
\dfrac{a}{3}\dsum_{i=1}^nS_k^{ii}+\Big(f_{p_1p_1}-\dfrac{(k-1)f_{p_1}^2}{kf}\Big)u_{11}-C>0.
\end{equation}
Choose $\beta>a^2$, such that
\begin{equation}\label{3.15}
-\dfrac{4}{\beta}\dsum_{i>\mu}S_k^{ii}(ax_i)^2+\dfrac{a}{3}\dsum_{i>\mu}S_k^{ii}>0.
\end{equation}
Substituting \eqref{3.12}, \eqref{3.14}, \eqref{3.15} into
\eqref{3.11}, we get
\begin{align*}
-\dfrac{C}{u}+\dsum_{i=1}^\mu\dfrac{(\beta+2\beta^2)S_k^{ii}u_i^2}{u^2}
\geq&\dfrac{\varepsilon}{4}\dsum_{i=1}^\mu
S_k^{ii}u_{ii}^2+\dfrac{a}{3}\dsum_iS_k^{ii}.
\end{align*}
When
\begin{align*}
-\dfrac{C}{u}\geq
\dsum_{i=1}^\mu\dfrac{(\beta+2\beta^2)S_k^{ii}u_i^2}{u^2},
\end{align*}
we get
\begin{align*}
-\dfrac{2C}{u}\geq&\dfrac{a}{3}\dsum_iS_k^{ii}.
\end{align*}
 Using  \eqref{3.13}, we obtain \eqref{1.3}.
 \par
When
\begin{align*}
-\dfrac{C}{u}
<\dsum_{i=1}^\mu\dfrac{(\beta+2\beta^2)S_k^{ii}u_i^2}{u^2},
\end{align*}
we get
\begin{align*}
2\dsum_{i=1}^\mu\dfrac{(\beta+2\beta^2)S_k^{ii}u_i^2}{u^2}
\geq&\dfrac{\varepsilon}{4}\dsum_{i=1}^\mu S_k^{ii}u_{ii}^2.
\end{align*}
Since $u_{11}=\cdots=u_{\mu\mu}$, then
$S_k^{11}=\cdots=S_k^{\mu\mu}$, and then we get
$$
(-u)^2u_{11}^2\leq C
$$
by the above inequality.

\vskip 2mm

\par
{\bf Case B:} $\lambda_n<-\dfrac{\lambda_1}{2}$. Using \eqref{3.6}
and \eqref{3.10}, we get
\begin{align}\label{3.16}
-\dfrac{C}{u}+\dfrac{(\beta+2\beta^2)S_k^{ii}u_i^2}{u^2}\geq&\varepsilon
S_k^{ii}u_{ii}^2-4 S_k^{ii}(\varepsilon u_iu_{ii})^2-4
S_k^{ii}(ax_i)^2\\
&+a\dsum_i
S_k^{ii}+\Big(f_{p_1p_1}-\dfrac{(k-1)f_{p_1}^2}{kf}\Big)u_{11}-C.\nonumber
\end{align}
Note that $S_k^{nn}\geq \cdots\geq S_k^{11}$ and
$\lambda_n^2>\dfrac{\lambda_1^2}{4}$. We choose
$\varepsilon>8\varepsilon^2\displaystyle\max_\Omega|Du|^2$, then
\begin{align}\label{3.17}
\dfrac{3}{4}\varepsilon S_k^{ii}u_{ii}^2-4 S_k^{ii}(\varepsilon
u_iu_{ii})^2-4 S_k^{ii}(ax_i)^2>0,
\end{align}
if $\lambda_1$ is sufficiently large. Let $a$ be sufficiently large
satisfying \eqref{3.14}. Substituting \eqref{3.14}, \eqref{3.17}
into \eqref{3.16}, we get
\begin{equation*}
-\dfrac{C}{u}+\dfrac{(\beta+2\beta^2)S_k^{ii}u_i^2}{u^2}\geq\dfrac{\varepsilon}{4}S_k^{ii}u_{ii}^2+\dfrac{2a}{3}\dsum_iS_k^{ii}.
\end{equation*}
Using again \eqref{3.13}, we obtain \eqref{1.3}.

\par
{\bf Proof of \eqref{1.4}:} Let $\lambda_1=\lambda_1(x)$ denote the
biggest eigenvalue of $D^2u$. We consider the following test
function:
$$\phi(x)=\lambda_1(-u)^\beta \exp\{\dfrac{\varepsilon}{2}|Du|^2\},$$
where $\beta=1+\delta'=1+2\delta$. We may assume that
$0<\delta<\dfrac{1}{3}$.
\par
Suppose that function $\phi$ achieves its maximum value in $\Omega$
at some point $x_0$, and similar to the process of obtaining
\eqref{3.1} and \eqref{3.6}, let $a=0$ and use the convexity of
$f^{\frac{1}{k}}(x,u,p)$, we obtain
\begin{equation}\label{3.18}
\dfrac{\beta u_i}{u}+\dfrac{u_{11i}}{u_{11}}+\varepsilon
u_iu_{ii}=0,
\end{equation}
\begin{align}\label{3.19}
-\dfrac{C}{u}\geq& -\dfrac{\beta S_k^{ii}u_i^2}{u^2}+\dfrac{2}{u_{11}}\dsum_{j\neq 1}S_k^{11,jj}u_{11j}^2+2\dsum_{j>\mu}\dfrac{S_k^{11}u_{11j}^2}{\lambda_1(\lambda_1-\lambda_j)}\\
                  &-\dfrac{S_k^{ii}u_{11i}^2}{u_{11}^2}+\varepsilon S_k^{ii}u_{ii}^2-C.\nonumber
\end{align}

\par
Next, we divide into two cases to deal with \eqref{3.19}.
\par
{\bf Case A:} $\lambda_n\geq-\dfrac{\delta\lambda_1}{3}$. For index
$i>\mu$, using \eqref{3.18}, we have
\begin{equation}\label{3.20}
-\dfrac{\beta u_i^2}{u^2}\geq -\dfrac{(1+\delta)u_{11i}^2}{\beta
u_{11}^2}-\dfrac{1+1/\delta}{\beta}(\varepsilon u_iu_{ii})^2.
\end{equation}
For any $j>\mu$, we have
\begin{align}\label{3.21}
\dfrac{2}{u_{11}}&S_k^{11,jj}u_{11j}^2+2\dfrac{S_k^{11}}{\lambda_1(\lambda_1-\lambda_j)}u_{11j}^2-\dfrac{1+\delta+\beta}{\beta}\dfrac{S_k^{jj}}{u_{11}^2}u_{11j}^2\\
&=\dfrac{\delta\lambda_1+(2+3\delta)\lambda_j}{\beta(\lambda_1-\lambda_j)}\dfrac{S_k^{jj}u_{11j}^2}{\lambda_1^2}\geq0.\nonumber
\end{align}
Substituting \eqref{3.20}, \eqref{3.21} into \eqref{3.19}, we obtain
\begin{align}\label{3.22}
-\dfrac{C}{u}\geq& -\dsum_{i=1}^\mu\dfrac{\beta
S_k^{ii}u_i^2}{u^2}-\dsum_{i=1}^\mu\dfrac{S_k^{ii}u_{11i}^2}{u_{11}^2}+\varepsilon
S_k^{ii}u_{ii}^2-\dfrac{1+1/\delta}{\beta}\dsum_{i>\mu}S_k^{ii}(\varepsilon
u_iu_{ii})^2-C.
\end{align}
For index $i\leq\mu$, using \eqref{3.18}, we have
\begin{equation}\label{3.23}
-\dfrac{u_{11i}^2}{u_{11}^2}\geq-2(\dfrac{\beta
u_i}{u})^2-2(\varepsilon u_iu_{ii})^2.
\end{equation}
Substituting \eqref{3.23} into \eqref{3.22}, we obtain
\begin{align}\label{3.24}
-&\dfrac{C}{u}+\dsum_{i=1}^\mu\dfrac{(\beta+2\beta^2)S_k^{ii}u_i^2}{u^2}\\
 &\geq \varepsilon S_k^{ii}u_{ii}^2-2\dsum_{i=1}^\mu S_k^{ii}(\varepsilon u_iu_{ii})^2-\dfrac{1+1/\delta}{\beta}\dsum_{i>\mu}S_k^{ii}(\varepsilon u_iu_{ii})^2-C.\nonumber
\end{align}
We take $\varepsilon$ sufficiently small, such that
$$\varepsilon>\max\{4,\dfrac{2+2/\delta}{\beta}\}\varepsilon^2\displaystyle\max_\Omega|Du|^2.$$
Then we have
\begin{equation}\label{3.25}
\dfrac{1}{2}\varepsilon S_k^{ii}u_{ii}^2-2\dsum_{i=1}^\mu
S_k^{ii}(\varepsilon
u_iu_{ii})^2-\dfrac{1+1/\delta}{\beta}\dsum_{i>\mu}S_k^{ii}(\varepsilon
u_iu_{ii})^2>0.
\end{equation}
Using \eqref{3.24}, \eqref{3.25} and $\la_1S_k^{11}\geq\theta S_k$,
we obtain
\begin{align*}
-\dfrac{C}{u}+\dsum_{i=1}^\mu\dfrac{(\beta+2\beta^2)S_k^{ii}u_i^2}{u^2}
\geq&\dfrac{\varepsilon}{3} S_k^{ii}u_{ii}^2\geq
\dfrac{\varepsilon}{6} \dsum_{i=1}^\mu
S_k^{ii}u_{ii}^2+\dfrac{\varepsilon \theta f}{6} u_{11}.
\end{align*}
Hence, we get \eqref{1.4}. \vskip 2mm

\par
{\bf Case B:} $\lambda_n<-\dfrac{\delta\lambda_1}{3}$, then
$\la_n^2>\dfrac{\delta^2\lambda_1^2}{9}$. Using \eqref{3.19} and
\eqref{3.23}, we obtain
\begin{align}\label{3.26}
-\dfrac{C}{u}+\dfrac{(\beta+2\beta^2)S_k^{ii}u_i^2}{u^2}
\geq&\varepsilon S_k^{ii}u_{ii}^2-2 S_k^{ii}(\varepsilon
u_iu_{ii})^2-C.
\end{align}
We choose
$\varepsilon>4\varepsilon^2\displaystyle\max_\Omega|Du|^2$. It then
follows that
\begin{equation*}
-\dfrac{C}{u}+\dfrac{(\beta+2\beta^2)S_k^{ii}u_i^2}{u^2}\geq\dfrac{\varepsilon}{3}S_k^{ii}u_{ii}^2.
\end{equation*}
Hence, we complete the proof of Theorem \ref{th1.1}.

\section{Pogorelov type $C^2$ estimates for $k$-convex solutions}
\par
In this section, we will prove Theorem \ref{th1.2}. Similar to the
method in \cite{LRW1}, we consider the following test function.
\begin{equation*}
\phi=m\log(-u)+\log P_m+\dfrac{mN}{2}|Du|^2,
\end{equation*}
where
\begin{equation*}
P_m=\dsum_j\kappa_j^m, \kappa_j=\lambda_j+K_0,
\end{equation*}
$m$ and $N$ are some sufficiently large undetermined constants. The
$\lambda_1,\lambda_2,\cdots,\lambda_n$ are eigenvalues of the
Hessian matrix $D^2u$. Let $K_0=n\Big(\dfrac{\sup_\Omega
f}{\alpha}\Big)^{\frac{1}{k-1}}$. By \eqref{2.4},
$\kappa_1,\kappa_2,\cdots,\kappa_n$ are nonnegative. Suppose that
function $\phi$ achieves its maximum value in $\Omega$ at some point
$x_0$. By rotating the coordinates, we assume that $(u_{ij})$ is a
diagonal matrix at $x_0$, and
$\kappa_1\geq\kappa_2\geq\cdots\geq\kappa_n$.
\par
Differentiating test function twice and using Lemma \ref{lem2.2}, at
$x_0$, we have
\begin{equation}\label{4.1}
\dfrac{\dsum_j\kappa_j^{m-1}u_{jji}}{P_m}+Nu_iu_{ii}+\dfrac{u_i}{u}=0,
\end{equation}
and
\begin{align}\label{4.2}
0\geq&\dfrac{1}{P_m}\Big[\dsum_j\kappa_j^{m-1}u_{jjii}+(m-1)\dsum_j\kappa_j^{m-2}u_{jji}^2+\dsum_{p\neq
q}\dfrac{\kappa_p^{m-1}-\kappa_q^{m-1}}{\kappa_p-\kappa_q}u_{pqi}^2\Big]\\
&-\dfrac{m}{P_m^2}\Big(\dsum_j\kappa_j^{m-1}u_{jji}\Big)^2+\dsum_sNu_su_{sii}+Nu_{ii}^2+\dfrac{u_{ii}}{u}-\dfrac{u_i^2}{u^2}.\nonumber
\end{align}
\par
At $x_0$, differentiating the equation \eqref{1.1} twice, we have
\begin{equation}\label{4.3}
S_k^{ii}u_{iij}=f_j+f_uu_j+f_{p_j}u_{jj}
\end{equation}
and
\begin{equation}\label{4.4}
S_k^{ii}u_{iijj}+S_k^{pq,rs}u_{pqj}u_{rsj}\geq-C-Cu_{11}^2+\dsum_sf_{p_s}u_{sjj}.
\end{equation}
Here, $C$ is a constant depending on $f$, the diameter of the domain
$\Omega$, $\displaystyle\sup_\Omega |u|$ and
$\displaystyle\sup_\Omega |Du|$.  Contacting $S_k^{ii}$ in both
sides of \eqref{4.2}, using \eqref{4.4}, we have
\begin{align}\label{4.5}
0\geq& \dfrac{1}{P_m}\Big[\dsum_j\kappa_j^{m-1}\Big(-C-Cu_{11}^2+\dsum_sf_{p_s}u_{sjj}-K(S_k)_j^2+K(S_k)_j^2\\
       &-S_k^{pq,rs}u_{pqj}u_{rsj}\Big)+(m-1)S_k^{ii}\dsum_j\kappa_j^{m-2}u_{jji}^2+S_k^{ii}\dsum_{p\neq
q}\dfrac{\kappa_p^{m-1}-\kappa_q^{m-1}}{\kappa_p-\kappa_q}u_{pqi}^2\Big]\nonumber\\
       &-\dfrac{mS_k^{ii}}{P_m^2}\Big(\dsum_j\kappa_j^{m-1}u_{jji}\Big)^2+\dsum_sNu_sS_k^{ii}u_{sii}+NS_k^{ii}u_{ii}^2+\dfrac{S_k^{ii}u_{ii}}{u}-\dfrac{S_k^{ii}u_i^2}{u^2}.\nonumber
\end{align}
Using \eqref{4.1} and \eqref{4.3}, we have
\begin{align}\label{4.6}
\dfrac{1}{P_m}\dsum_j\dsum_s\kappa_j^{m-1}f_{p_s}u_{sjj}+\dsum_sNu_sS_k^{ii}u_{sii}\geq-\dsum_sf_{p_s}\dfrac{u_s}{u}-C.
\end{align}
By Lemma \ref{lem2.2},
\begin{align}\label{4.7}
 -S_k^{pq,rs}u_{pqj}u_{rsj}=&-S_k^{pp,qq}u_{ppj}u_{qqj}+\dsum_{p\neq
q}S_k^{pp,qq}u_{pqj}^2\\
\geq&-S_k^{pp,qq}u_{ppj}u_{qqj}+2\sum_{j\neq
i}S_k^{jj,ii}u_{jji}^2.\nonumber
\end{align}
For the given index $1\leq i,j\leq n$, we denote
\begin{align*}
   &A_i=\dfrac{\kappa_i^{m-1}}{P_m}\Big[K(S_k)_i^2-\dsum_{p,q}S_k^{pp,qq}u_{ppi}u_{qqi}\Big], ~~B_i=\dfrac{2\kappa_j^{m-1}}{P_m}\dsum_jS_k^{jj,ii}u_{jji}^2,\\
   &C_i=\dfrac{m-1}{P_m}S_k^{ii}\dsum_j\kappa_j^{m-2}u_{jji}^2, \qquad \qquad\qquad D_i=\dfrac{2S_k^{jj}}{P_m}\dsum_{j\neq i}\dfrac{\kappa_j^{m-1}-\kappa_i^{m-1}}{\kappa_j-\kappa_i}u_{jji}^2,\\
   &E_i=\dfrac{mS_k^{ii}}{P_m^2}\Big(\dsum_j\kappa_j^{m-1}u_{jji}\Big)^2.
\end{align*}
Using \eqref{4.6}, \eqref{4.7} and the above definitions,
\eqref{4.5} becomes
\begin{align}\label{4.8}
0\geq&\dsum_i(A_i+B_i+C_i+D_i-E_i)+NS_k^{ii}u_{ii}^2+\dfrac{S_k^{ii}u_{ii}}{u}-\dfrac{S_k^{ii}u_i^2}{u^2}\\
&-\dsum_sf_{p_s}\dfrac{u_s}{u}-C(K)u_{11}.\nonumber
\end{align}

\par
Let's deal with the third derivatives. We divide two cases $i\neq 1$
and $i=1$. The proof is the same as in \cite{LRW1}.
\begin{lemma}\label{lem4.1}
For any $i\neq1$, we have
$$A_i+B_i+C_i+D_i-(1+\dfrac{1}{m})E_i\geq0,$$
for sufficiently large $m$ and $\lambda_1$.
\end{lemma}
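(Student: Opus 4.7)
The task is to show $A_i + B_i + C_i + D_i \ge (1+\tfrac{1}{m}) E_i$ for every fixed $i\neq 1$. First, $A_i\ge 0$ follows from the concavity of $S_k^{1/k}$ on $\tilde\Gamma_k$: the argument behind Lemma~\ref{lem2.1} with the trivial choice $l=0$ (so $S_l\equiv 1$, $(S_l)_h=0$, $\vartheta=1/k$) gives $\sum_{p,q} S_k^{pp,qq} u_{ppi}u_{qqi}\le \tfrac{k-1}{kS_k}(S_k)_i^2$, matching the constant $K=\tfrac{k-1}{kS_k}$ chosen just before the definition of $A_i$. Similarly $B_i\ge 0$ on $\Gamma_k$, since each $S_k^{jj,ii}=S_{k-2}(\lambda|ij)$ is non-negative. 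Hence it suffices to dominate $(1+\tfrac{1}{m}) E_i$ by $C_i+D_i$, with any residual lower order terms absorbed by the positive $A_i$ and $B_i$.

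I would estimate $E_i$ by two applications of Cauchy-Schwarz that isolate the dominant direction $j=1$. For a parameter $\tau>0$ of order $1/m$,
\[
\Big(\sum_j \kappa_j^{m-1} u_{jji}\Big)^2 \le (1+\tau) \kappa_1^{2(m-1)} u_{11i}^2 + \Big(1+\tfrac{1}{\tau}\Big)\Big(\sum_{j\neq 1} \kappa_j^{m-1} u_{jji}\Big)^2,
\]
and then
\[
\Big(\sum_{j\neq 1} \kappa_j^{m-1} u_{jji}\Big)^2 \le (P_m-\kappa_1^m)\sum_{j\neq 1}\kappa_j^{m-2} u_{jji}^2.
\]
The piece $\sum_{j\neq 1}\kappa_j^{m-2}u_{jji}^2$ is absorbed by the $j\neq 1$ part of $C_i=\tfrac{(m-1)S_k^{ii}}{P_m}\sum_j\kappa_j^{m-2}u_{jji}^2$, provided $(m+1)(1+\tfrac{1}{\tau})(P_m-\kappa_1^m)\le (m-1)P_m$; with $\tau$ of order $1/m$ this works whenever $\kappa_1^m/P_m$ is close enough to $1$. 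When it is not, the remaining $j\neq 1, j\neq i$ terms in $D_i$ supply the missing positivity through the identity $\tfrac{\kappa_j^{m-1}-\kappa_i^{m-1}}{\kappa_j-\kappa_i}=\sum_{\ell=0}^{m-2}\kappa_j^{m-2-\ell}\kappa_i^\ell$, whose size is comparable to the eigenvalues that were large enough to spoil $\kappa_1^m/P_m\approx 1$.

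The main obstacle is matching the coefficient of $u_{11i}^2$: one must dominate $\tfrac{(m+1)(1+\tau) S_k^{ii}\kappa_1^{2(m-1)}}{P_m^2} u_{11i}^2$ by the combined $j=1$ pieces of $C_i$ and $D_i$, namely $\tfrac{(m-1)S_k^{ii}\kappa_1^{m-2}}{P_m}u_{11i}^2 + \tfrac{2S_k^{11}}{P_m}\cdot\tfrac{\kappa_1^{m-1}-\kappa_i^{m-1}}{\kappa_1-\kappa_i} u_{11i}^2$. I would split into two subcases. If the multiplicity of $\kappa_1$ is $\mu\ge 2$ and $i$ lies in the top eigenspace, then $S_k^{11}=S_k^{ii}$, $\kappa_1^m/P_m\le 1/\mu$, and $\tfrac{\kappa_1^{m-1}-\kappa_i^{m-1}}{\kappa_1-\kappa_i}\to (m-1)\kappa_1^{m-2}$ in the degenerate limit, which closes the comparison comfortably. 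In the generic subcase $\kappa_i<\kappa_1$, Lemma~\ref{lem2.4}(b) bounds $S_k^{ii}/S_k^{11}$ by a power of $\kappa_1/\kappa_i$ and Lemma~\ref{lem2.4}(c) relates $\lambda_1 S_k^{11}$ to $S_k$; combined with the universal lower bound $\tfrac{\kappa_1^{m-1}-\kappa_i^{m-1}}{\kappa_1-\kappa_i}\ge \kappa_1^{m-2}$, these yield the required coefficient inequality once $m$ and $\lambda_1$ are both sufficiently large. The overall strategy parallels the proof of the analogous lemma in \cite{LRW1} for the pure $k$-Hessian case, with the only modification being to replace estimates for $\sigma_k^{ii}$ by the corresponding ones for $S_k^{ii}$ provided by Lemma~\ref{lem2.4}.
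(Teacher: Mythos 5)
Your outline goes wrong at the step where you prove $B_i\geq 0$ and then discard it, attempting to dominate $(1+\tfrac{1}{m})E_i$ by $C_i+D_i$ alone. Test the $j=1$ coefficient of $u_{11i}^2$ in the regime $\kappa_2\ll\kappa_1$ (so $\kappa_1^m/P_m$ is close to $1$) and $\kappa_i\ll\kappa_1$. Your comparison requires
\begin{equation*}
(m+1)(1+\tau)\,S_k^{ii}\,\frac{\kappa_1^{m}}{P_m}\;\leq\;(m-1)S_k^{ii}+2S_k^{11}\sum_{l=0}^{m-2}\Big(\frac{\kappa_i}{\kappa_1}\Big)^{l},
\end{equation*}
whose left side is essentially $(m+1)S_k^{ii}$ while the geometric sum on the right stays close to $1$, leaving a deficit of roughly $2S_k^{ii}-2S_k^{11}$. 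Since $S_k^{ii}=S_{k-1}(\lambda|i)$ can exceed $S_k^{11}=S_{k-1}(\lambda|1)$ by an arbitrarily large factor when $\lambda_i\ll\lambda_1$, this deficit cannot be closed; the tools you invoke, \eqref{2.5} and \eqref{2.51}, point the wrong way here, because $(\kappa_1/\kappa_i)^{k+2}$ blows up exactly when the geometric sum degenerates to $1$. The same mismatch between $S_k^{jj}$ and $S_k^{ii}$ undermines your fallback for absorbing $\big(\sum_{j\neq1}\kappa_j^{m-1}u_{jji}\big)^2$ when $\kappa_1^m/P_m$ is not close to $1$.

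The missing ingredient is precisely the $j=1$ (more generally, large-$\kappa_j$) part of $B_i$. The paper does not treat $B_i$ as mere slack: it combines $B_i$ with $D_i$ via the symmetry identity
\begin{equation*}
\kappa_jS_k^{jj,ii}+S_k^{jj}=\kappa_iS_k^{ii,jj}+S_k^{ii}\;\geq\;S_k^{ii},
\end{equation*}
which converts the $S_k^{jj}$-weighted coefficients of $D_i$ into $S_k^{ii}$-weighted ones and supplies the extra $2\kappa_1^{m-2}S_k^{ii}u_{11i}^2/P_m$ that upgrades the available coefficient from $(m-1)$ to $(m+1)$; inequality \eqref{2.5} is then used only to extract a surplus (the constant $7$ in the paper) from the remaining terms, and the proof closes with the discriminant condition $(m+8)(m-3)\geq(m+1)^2$ for the quadratic form in $(u_{11i},u_{iii})$. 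Your treatment of $A_i$ and the $\mu\geq 2$ subcase is consistent with the paper, but without reinstating $B_i$ in this way the argument does not close.
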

\begin{proof}
First, let $l=1$ in the formula \eqref{2.2} of Lemma \ref{lem2.1},
then we have $A_i\geq0$ for sufficiently large $K$.
\par
Next, it is the same as (3.13) in \cite{LRW1}. Using
\begin{align*}
\kappa_jS_k^{jj,ii}+S_k^{jj}&=(\lambda_j+K_0)S_k^{jj,ii}+S_k^{jj}\\
&=K_0S_k^{jj,ii}+S_k^{ii}-S_{k-1}(\lambda|ij)+\lambda_iS_{k-2}(\lambda|ij)+S_{k-1}(\lambda|ij)\\
&=(\lambda_i+K_0)S_k^{ii,jj}+S_k^{ii} \geq  S_k^{ii},
\end{align*}
and Cauchy-Schwarz inequalities  we obtain
\begin{align}\label{4.9}
    P_m^2&\Big[B_i+C_i+D_i-(1+\dfrac{1}{m})E_i\Big]\\
      &\geq\dsum_{j\neq i}\Big[(m+1)\kappa_i^m\kappa_j^{m-2}S_k^{ii}+2\kappa_1^mS_k^{jj}\dsum_{l=0}^{m-3}\kappa_i^{m-2-l}\kappa_j^l\Big]u_{jji}^2\nonumber\\
    &+\Big[(m-1)(P_m-\kappa_i^m)-2\kappa_i^m\Big]S_k^{ii}\kappa_i^{m-2}u_{iii}^2\nonumber\\
    &-2(m+1)S_k^{ii}\kappa_i^{m-1}u_{iii}\dsum_{j\neq
    i}\kappa_j^{m-1}u_{jji}.\nonumber
\end{align}
By \eqref{2.5},  we have
\begin{equation*}
2\kappa_1^mS_k^{jj}\dsum_{l=0}^{m-3}\kappa_i^{m-2-l}\kappa_j^l\geq 7
S_k^{ii}\kappa_j^{m-2}\kappa_i^m
\end{equation*}
for $m\geq\dmax\{k+2,10\}$. Using the above formula and \eqref{4.9},
\begin{align*}
     P_m^2\Big[&B_i+C_i+D_i-(1+\dfrac{1}{m})E_i\Big]\nonumber\\
     \geq&\dsum_{j\neq i}(m+8)\kappa_i^m\kappa_j^{m-2}S_k^{ii}u_{jji}^2+[(m-1)(P_m-\kappa_i^m)-2\kappa_i^m]S_k^{ii}\kappa_i^{m-2}u_{iii}^2\\
    &-2(m+1)S_k^{ii}\kappa_i^{m-1}u_{iii}\dsum_{j\neq
    i}\kappa_j^{m-1}u_{jji}\\
    \geq&(m+8)\kappa_i^m\kappa_1^{m-2}S_k^{ii}u_{11i}^2+[(m-1)\kappa_1^m-2\kappa_i^m]S_k^{ii}\kappa_i^{m-2}u_{iii}^2\\
    &-2(m+1)S_k^{ii}\kappa_i^{m-1}u_{iii}\kappa_1^{m-1}u_{11i}\\
    \geq&(m+8)\kappa_i^m\kappa_1^{m-2}S_k^{ii}u_{11i}^2+(m-3)\kappa_1^mS_k^{ii}\kappa_i^{m-2}u_{iii}^2\\
    &-2(m+1)S_k^{ii}\kappa_i^{m-1}u_{iii}\kappa_1^{m-1}u_{11i}\\
    \geq&0.
\end{align*}
In the second inequality we have used, for $m\geq10$
$$\dsum_{j\neq i,1}(m+8)\kappa_i^m\kappa_j^{m-2}S_k^{ii}u_{jji}^2+(m-1)\dsum_{j\neq i,1}\kappa_j^m\kappa_i^{m-2}S_k^{ii}u_{iii}^2$$
$$-2(m+1)S_k^{ii}\kappa_i^{m-1}u_{iii}\dsum_{j\neq
    i,1}\kappa_j^{m-1}u_{jji}\geq0.$$
In the forth inequality we have used
$$(m+8)(m-3)\geq(m+1)^2.$$
Take $m=\max\{k+2,10\}$. Then we obtain Lemma \ref{lem4.1}.
\end{proof}

\begin{lemma}\label{lem4.2}
For $\mu=1,\cdots,k-2$, if there exists some positive constant
$\delta\leq1$, such that $\dfrac{\lambda_\mu}{\lambda_1}\geq\delta$,
then there exist two sufficiently small positive constants
$\eta,\delta'$ depending on $\delta$, such that, if
$\dfrac{\lambda_{\mu+1}}{\lambda_1}\leq\delta'$, we have
$$A_1+B_1+C_1+D_1-(1+\dfrac{\eta}{m})E_1\geq0.$$
\end{lemma}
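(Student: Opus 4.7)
My plan is to parallel the treatment of the analogous lemma in~\cite{LRW1} for the $k$-Hessian case, with the algebraic inequalities upgraded to the sum Hessian operator $S_k=\sigma_k+\alpha\sigma_{k-1}$ via Lemma~\ref{lem2.1}. The quantities $A_1,B_1,C_1,D_1,E_1$ have the same structure as in the $\sigma_k$ setting, apart from the extra $\alpha\sigma_{k-2}$ contributions sitting inside $S_k^{ii}$ and $S_k^{pp,qq}$; consequently the overall scheme survives, and the task is to absorb the single negative term $(1+\eta/m)E_1$ into the sum $A_1+B_1+C_1+D_1$ under the two-scale hypothesis on the $\kappa_j$'s.

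\emph{Step 1 (the concavity piece).} First I apply Lemma~\ref{lem2.1} with $l=k-1$ and fix $K$ large (depending only on $n,k,\alpha$) so that $K(S_k)_1^2-S_k^{pp,qq}u_{pp1}u_{qq1}\ge 0$, which gives $A_1\ge 0$. This is the only step that uses the concavity of $S_k^{1/k}$ on $\tilde\Gamma_k$ from \cite{LRW2}.

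\emph{Step 2 (splitting the bad term).} By hypothesis and taking $\lambda_1$ large (the only case to treat), $\kappa_j\ge(\delta/2)\kappa_1$ for $j\in L:=\{1,\dots,\mu\}$ and $\kappa_j\le 2\delta'\kappa_1$ for $j\in S:=\{\mu+1,\dots,n\}$. A Peter--Paul inequality gives
$$
E_1\le \frac{mS_k^{11}}{P_m^{2}}\Bigl[(1+\tau)T_L^2+\bigl(1+\tfrac1\tau\bigr)T_S^2\Bigr],\qquad \tau=\tfrac{\eta}{3m},
$$
where $T_L=\sum_{j\in L}\kappa_j^{m-1}u_{jj1}$ and $T_S=\sum_{j\in S}\kappa_j^{m-1}u_{jj1}$.

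\emph{Step 3 (absorbing the $L$-part).} For $j\in L\setminus\{1\}$ the quotient $(\kappa_1^{m-1}-\kappa_j^{m-1})/(\kappa_1-\kappa_j)$ is bounded below by $(m-1)(\delta/2)^{m-2}\kappa_1^{m-2}$ (it tends to $(m-1)\kappa_1^{m-2}$ as $\kappa_j\to\kappa_1$), and $S_k^{jj}\ge S_k^{11}$ for $j\ge 2$; so $D_1$ itself contains a term of size $\sim m S_k^{11}\kappa_1^{m-2}u_{jj1}^2/P_m$ for each $j\in L\setminus\{1\}$. Combined with the $j=1$ contribution of $C_1$ and a weighted Cauchy--Schwarz (with weights $\kappa_j^m$ and $\kappa_j^{m-2}$) applied to $T_L^2$, this absorbs the $L$-part of $E_1$ for $m$ large.

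\emph{Step 4 (absorbing the $S$-part).} For $j\in S$, $\kappa_1-\kappa_j\ge(1-3\delta')\kappa_1$, so $(\kappa_1^{m-1}-\kappa_j^{m-1})/(\kappa_1-\kappa_j)\ge (1-\delta'^{m-1})\kappa_1^{m-2}/(1-3\delta')$ and $D_1$ supplies coefficient at least $\gtrsim S_k^{11}\kappa_1^{m-2}/P_m$ for each $u_{jj1}^2$, $j\in S$. Cauchy--Schwarz on $T_S^2$ together with $\sum_{j\in S}\kappa_j^m\le(n-\mu)(2\delta')^m\kappa_1^m$ produces a matching coefficient no larger than $(1+1/\tau)(2\delta')^m S_k^{11}\kappa_1^{m-2}/P_m$, which is absorbed once $(2\delta')^m\ll\tau=\eta/(3m)$.

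\emph{Step 5 (closing the book-keeping).} Choose $\delta'$ small (depending on $\delta,\eta,n,k,m$), then $\eta$ small, then $m$ large, so that every quantitative inequality above holds. The main obstacle will be Step~3: naive Cauchy--Schwarz on $T_L^2$ loses a factor comparable to $\mu/\delta^{m-2}$, which has to be exactly compensated by the diagonal-limit contribution of $D_1$ combined with $C_1$. The hypothesis $\mu\le k-2$ is essential here, because Lemma~\ref{lem2.4}(a) forces $\lambda_{k-1}$ to be \emph{a priori} bounded, so for $\mu\ge k-1$ the assumption $\lambda_\mu/\lambda_1\ge\delta$ cannot hold for $\lambda_1$ large and the lemma would be vacuous; equivalently, only at most $k-2$ eigenvalues can be genuinely "large", which is precisely what makes the $L$-absorption step feasible.
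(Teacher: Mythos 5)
There is a genuine gap, and it is in Step 1. By bounding $A_1\geq 0$ and declaring that the concavity is used only there, you discard the one term that actually carries the lemma. The point of Lemma \ref{lem4.2} (as opposed to Lemma \ref{lem4.1}, which treats $i\neq 1$) is that $B_1+C_1+D_1-(1+\frac{\eta}{m})E_1$ is \emph{not} nonnegative: take the model case $\mu=1$, $u_{jj1}=0$ for all $j\neq 1$, and $\kappa_j\ll\kappa_1$ so that $P_m\approx\kappa_1^m$. Then $B_1=D_1=0$ (since $S_k^{11,11}=0$), $C_1=\frac{(m-1)}{P_m}S_k^{11}\kappa_1^{m-2}u_{111}^2$, while $(1+\frac{\eta}{m})E_1=\frac{(m+\eta)\kappa_1^{2m-2}}{P_m^2}S_k^{11}u_{111}^2\approx\frac{(m+\eta)}{P_m}S_k^{11}\kappa_1^{m-2}u_{111}^2$, leaving a deficit of order $-\frac{(1+\eta)}{P_m}S_k^{11}\kappa_1^{m-2}u_{111}^2$ no matter how large $m$ is. Your Steps 2--4 cannot repair this: the $L$-part of $E_1$ with $L=\{1\}$ is exactly this diagonal term, and neither $C_1$ (short by the factor $\frac{m-1}{m+\eta}<1$) nor $D_1$ (which only sees $j\neq 1$) covers it. This is precisely the extra room $P_m-\kappa_i^m\geq\kappa_1^m$ that makes Lemma \ref{lem4.1} work for $i\neq1$ and is absent for $i=1$. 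A secondary issue: applying Lemma \ref{lem2.1} with $l=k-1$ does not even yield $A_1\geq0$ for large $K$, since the right-hand side of \eqref{2.2} still contains $-\frac{S_k}{S_l}S_l^{pp,qq}u_{pp1}u_{qq1}$, which has no sign for $l=k-1$; the paper gets $A_i\geq0$ in Lemma \ref{lem4.1} by taking $l=1$, where $S_1^{pp,qq}=0$.

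The paper's proof instead keeps the quantitative positivity of $A_1$: it first isolates the deficit as $P_m^2[B_1+C_1+D_1-(1+\frac{\eta}{m})E_1]\geq-(1+\eta)S_k^{11}\kappa_1^{2m-2}u_{111}^2+2P_m\kappa_1^{m-2}\sum_{j\neq1}S_k^{jj}u_{jj1}^2$, and then applies Lemma \ref{lem2.1} with $l=\mu$ (with $l=1$ when $\mu=1$) to show $P_m^2A_1\geq(1+\eta)\kappa_1^{2m-2}S_k^{11}\sum_{a\leq\mu}u_{aa1}^2$ minus terms of the form $C P_m\kappa_1^{m-3}\sum_{a>\mu}u_{aa1}^2$. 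This step is where both hypotheses enter: $\lambda_\mu/\lambda_1\geq\delta$ controls $S_\mu^{aa}$ and $S_{\mu-1}(\lambda|ab)$ for $a,b\leq\mu$ via Lemma \ref{lem2.4a} and Lemma \ref{lem2.5}, and $\lambda_{\mu+1}/\lambda_1\leq\delta'$ together with \eqref{2.51} makes the coefficient of $\sum_{a\leq\mu}u_{aa1}^2$ exceed $(1+\eta)\kappa_1^{2m-2}S_k^{11}$. The leftover $-CP_m\kappa_1^{m-3}\sum_{a>\mu}u_{aa1}^2$ is then absorbed by the surplus $2P_m\kappa_1^{m-2}\sum_{j\neq1}S_k^{jj}u_{jj1}^2$ using $\kappa_1S_k^{jj}\geq\theta S_k/\delta'$ for $j>\mu$, which fixes how small $\delta'$ must be. Your proposal has no substitute for this mechanism, so the argument does not close.
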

\begin{proof}
First, it is the same as (3.19) in \cite{LRW1}. For $m\geq5$, we
have
\begin{align}\label{4.10}
    P_m^2\Big[&B_1+C_1+D_1-(1+\dfrac{\eta}{m})E_1\Big]\\
   \geq&-(1+\eta)S_k^{11}\kappa_1^{2m-2}u_{111}^2+2P_m\kappa_1^{m-2}\dsum_{j\neq1}S_k^{jj}u_{jj1}^2.\nonumber
\end{align}
\par
Next, we will replace $-(1+\eta)S_k^{11}\kappa_1^{2m-2}u_{111}^2$
from $A_1$. We divide two cases $\mu=1$ and $2\leq\mu\leq k-2$.
\par
For $\mu=1$, since $S_1=\sigma_1+\alpha$ and $S_1^{aa,bb}=0$, by
Lemma \ref{lem2.1}, for sufficiently large $K$, we have
\begin{align}\label{4.11}
    A_1&=\dfrac{\kappa_1^{m-1}}{P_m}\Big[K(S_k)_1^2-\dsum_{p,q}S_k^{pp,qq}u_{pp1}u_{qq1}\Big]\\
   &\geq\dfrac{\kappa_1^{m-1}}{P_m}S_k(1+\dfrac{\vartheta}{2})\dfrac{(S_1)_1^2}{S_1^2}.\nonumber
\end{align}
Using $S_1^{aa}=1$, we have
\begin{align}\label{4.12}
    (1+\dfrac{\vartheta}{2})(S_1)_1^2=&(1+\dfrac{\vartheta}{2})\Big(\dsum_au_{aa1}\Big)^2\\
    =&(1+\dfrac{\vartheta}{2})u_{111}^2+2(1+\dfrac{\vartheta}{2})\dsum_{a\neq1}u_{aa1}u_{111}+(1+\dfrac{\vartheta}{2})\Big(\dsum_{a\neq1}u_{aa1}\Big)^2\nonumber\\
   \geq&(1+\dfrac{\vartheta}{4})u_{111}^2-C_\vartheta\dsum_{a\neq1}u_{aa1}^2.\nonumber
\end{align}
Combining \eqref{4.11}, \eqref{4.12} and \eqref{2.51}, we obtain
\begin{align}\label{4.13}
P_m^2A_1&\geq\dfrac{P_m\kappa_1^{m-1}S_k}{S_1^2}(1+\dfrac{\vartheta}{4})u_{111}^2-\dfrac{P_m\kappa_1^{m-1}C_\vartheta}{S_1^2}\dsum_{a\neq1}u_{aa1}^2\\
    &\geq\dfrac{P_m\kappa_1^{m-2}S_k^{11}(1-\varepsilon_0)}{(1+\dsum_{j\neq1}\dfrac{\lambda_j}{\lambda_1}+\dfrac{\alpha}{\lambda_1})^2}(1+\dfrac{\vartheta}{4})u_{111}^2-\dfrac{P_m\kappa_1^{m-1}C_\vartheta}{S_1^2}\dsum_{a\neq1}u_{aa1}^2\nonumber\\
   &\geq(1+\eta)\kappa_1^{2m-2}S_k^{11}u_{111}^2-P_m\kappa_1^{m-3}C_\vartheta\dsum_{a\neq1}u_{aa1}^2.\nonumber
\end{align}
The last inequality comes from choosing positive constants
$\delta',\varepsilon_0,\eta$ such that
$$\dfrac{\lambda_j}{\lambda_1}\leq \delta', \quad (1+\dfrac{\vartheta}{4})(1-\varepsilon_0)\geq(1+\eta)(1+n\delta')^2.$$

\par
For $2\leq\mu\leq k-2$, by Lemma \ref{lem2.1}, we have
\begin{align}\label{4.14}
   A_1\geq&\dfrac{\kappa_1^{m-1}}{P_m}\Big[S_k\dfrac{(S_\mu)_1^2}{S_\mu^2}-\dfrac{S_k}{S_\mu}S_\mu^{pp,qq}u_{pp1}u_{qq1}\Big]\\
    =&\dfrac{\kappa_1^{m-1}S_k}{P_mS_\mu^2}\Big[\dsum_a(S_\mu^{aa}u_{aa1})^2+\dsum_{a\neq b}(S_\mu^{aa}S_\mu^{bb}-S_\mu S_\mu^{aa,bb})u_{aa1}u_{bb1}\Big].\nonumber
\end{align}
Next, we split $\dsum_{a\neq b}(S_\mu^{aa}S_\mu^{bb}-S_\mu
S_\mu^{aa,bb})u_{aa1}u_{bb1}$ into three terms to deal with:
$$
\dsum_{a\neq b}=\dsum_{a\neq b;a,b\leq\mu}+2\dsum_{a\leq
\mu,b>\mu}+\dsum_{a\neq b;a,b>\mu}:=T_1+T_2+T_3
$$
\par
First, let's deal with $T_1$. By Lemma \ref{lem2.5}, for $a\neq b$,
we have
\begin{align}\label{4.15}
S_\mu^{aa}S_\mu^{bb}-S_\mu
S_\mu^{aa,bb}=S_{\mu-1}^2(\lambda|ab)-S_\mu(\lambda|ab)S_{\mu-2}(\lambda|ab)\geq
0.
\end{align}
Note that $\lambda\in\Gamma_k, \mu\leq k-2$, which implies
$S_\mu(\lambda|ab)>0$. So \eqref{4.15} implies
\begin{align}\label{4.16}
T_1=&\dsum_{a\neq b;a,b\leq\mu}(S_\mu^{aa}S_\mu^{bb}-S_\mu S_\mu^{aa,bb})u_{aa1}u_{bb1}\\
\geq&-\dsum_{a\neq
b;a,b\leq\mu}\Big[S_{\mu-1}^2(\lambda|ab)-S_\mu(\lambda|ab)S_{\mu-2}(\lambda|ab)\Big]u_{aa1}^2\nonumber\\
\geq&-\dsum_{a\neq
b;a,b\leq\mu}S_{\mu-1}^2(\lambda|ab)u_{aa1}^2.\nonumber
\end{align}
By Lemma \ref{lem2.4a}, for any $a,b \leq\mu$,
\begin{equation}\label{4.17}
S_\mu^{aa}=\sigma_{\mu-1}(\la|a)+\al\sigma_{\mu-2}(\la|a)\geq\dfrac{\lambda_1\cdots\lambda_\mu}{\lambda_a},
\end{equation}
\begin{equation}\label{4.18}
S_{\mu-1}(\lambda|ab)\leq
C(\dfrac{\lambda_1\cdots\lambda_{\mu+1}}{\lambda_a\lambda_b}+\alpha\dfrac{\lambda_1\cdots\lambda_\mu}{\lambda_a\lambda_b})\leq
C(\dfrac{\alpha+\lambda_{\mu+1}}{\lambda_b})S_\mu^{aa}.
\end{equation}
Using \eqref{4.16}, \eqref{4.17}, \eqref{4.18}, we obtain
\begin{align}\label{4.19}
T_1\geq&-\dsum_{a\neq b;a,b\leq\mu}C_1\dfrac{(\alpha+\lambda_{\mu+1})^2}{\lambda_b^2}(S_\mu^{aa}u_{aa1})^2\\
   \geq&-\dfrac{C_1}{\delta^2}\dfrac{(\al+\delta'\lambda_1)^2}{\lambda_1^2}\dsum_{a\leq\mu}(S_\mu^{aa}u_{aa1})^2\nonumber\\
   \geq&-\varepsilon \dsum_{a\leq\mu}(S_\mu^{aa}u_{aa1})^2.\nonumber
\end{align}
Here, we choose positive constants $\delta',\varepsilon$
sufficiently small and $\lambda_1$ sufficiently large, satisfying
\begin{equation*}
\dfrac{4C_1\delta'^2}{\delta^2}<\varepsilon,\quad
\al<\delta'\lambda_1.
\end{equation*}
Next, let's deal with $T_2$ and $T_3$. Using Cauchy-Schwarz
inequalities we get
\begin{align}
T_2\geq&-2\dsum_{a\leq\mu;b>\mu}S_\mu^{aa}S_\mu^{bb}|u_{aa1}u_{bb1}|\label{4.20}\\
    \geq&-\varepsilon\dsum_{a\leq\mu;b>\mu}(S_\mu^{aa}u_{aa1})^2-\dfrac{1}{\varepsilon}\dsum_{a\leq\mu;b>\mu}(S_\mu^{bb}u_{bb1})^2,\nonumber\\
    T_3\geq& -\dsum_{a\neq b;a,b>\mu}S_\mu^{aa}S_\mu^{bb}|u_{aa1}u_{bb1}|\geq-\dsum_{a\neq
b;a,b>\mu}(S_\mu^{aa}u_{aa1})^2.\label{4.21}
\end{align}

Hence, combining \eqref{4.14}, \eqref{4.19}, \eqref{4.20} and
\eqref{4.21}, we obtain
\begin{equation}\label{4.22}
A_1\geq\dfrac{\kappa_1^{m-1}S_k}{P_mS_\mu^2}\Big[(1-2\varepsilon)\dsum_{a\leq\mu}(S_\mu^{aa}u_{aa1})^2-C_\varepsilon\dsum_{a>\mu}(S_\mu^{aa}u_{aa1})^2\Big].
\end{equation}

For $a>\mu$, we have
\begin{equation}\label{4.23}
S_\mu^{aa}\leq C_2\lambda_1\cdots\lambda_{\mu-1},\quad
S_\mu\geq\lambda_1\cdots\lambda_\mu.
\end{equation}
So
\begin{equation}\label{4.24}
-\dfrac{1}{S_\mu^2}\dsum_{a>\mu}(S_\mu^{aa}u_{aa1})^2\geq-\dfrac{C_2^2}{\lambda_{\mu}^2}\dsum_{a>\mu}u_{aa1}^2
\geq -\dfrac{C_3}{\kappa_1^2\delta^2}\dsum_{a>\mu}u_{aa1}^2.
\end{equation}

For $a\leq\mu$, we have
\begin{equation}\label{4.25}
S_\mu(\lambda|a)\leq
C_4(\dfrac{\lambda_1\cdots\lambda_{\mu+1}}{\lambda_a}+\alpha\dfrac{\lambda_1\cdots\lambda_\mu}{\lambda_a}).
\end{equation}
Using \eqref{4.23}, \eqref{4.25} and \eqref{2.51}, we have
\begin{align}\label{4.26}
\dfrac{S_k}{S_\mu^2}\dsum_{a\leq\mu}(S_\mu^{aa}u_{aa1})^2\geq&\dfrac{\lambda_1S_k^{11}(1-\varepsilon_0)}{S_\mu^2}\dsum_{a\leq\mu}(S_\mu^{aa}u_{aa1})^2\\
\geq&\dfrac{S_k^{11}(1-\varepsilon_0)}{\lambda_1}\dsum_{a\leq\mu}\Big(\dfrac{\lambda_aS_\mu^{aa}}{S_\mu}\Big)^2u_{aa1}^2\nonumber\\
    \geq&\dfrac{S_k^{11}(1-\varepsilon_0)}{\kappa_1}\dsum_{a\leq\mu}\Big(1-\dfrac{S_{\mu}(\la|a)}{S_\mu}\Big)^2u_{aa1}^2\nonumber\\
    \geq&\dfrac{S_k^{11}(1-\varepsilon_0)}{\kappa_1}\Big(1-\dfrac{C_5(\alpha+\delta'\lambda_1)}{\delta\lambda_1}\Big)^2\dsum_{a\leq\mu}u_{aa1}^2\nonumber\\
    \geq&\dfrac{S_k^{11}(1-\varepsilon_0)}{\kappa_1}\Big(1-\dfrac{C_6\delta'}{\delta}\Big)^2\dsum_{a\leq\mu}u_{aa1}^2.\nonumber
\end{align}

Combining \eqref{4.22}, \eqref{4.24}, \eqref{4.26} and
$P_m>(1+\delta^m)\kappa_1^m$, we obtain
\begin{align}\label{4.27}
P_m^2A_1\geq&(1+\delta^m)\kappa_1^{2m-2}S_k^{11}(1-\varepsilon_0)(1-2\varepsilon)\Big(1-\dfrac{C_6\delta'}{\delta}\Big)^2\dsum_{a\leq\mu}u_{aa1}^2\\
&-\dfrac{P_m\kappa_1^{m-3}C_{\varepsilon}'S_k}{\delta^2}\dsum_{a>\mu}u_{aa1}^2\nonumber\\
\geq&(1+\eta)\kappa_1^{2m-2}S_k^{11}\dsum_{a\leq\mu}u_{aa1}^2
 -\dfrac{P_m\kappa_1^{m-3}C_{\varepsilon}'S_k}{\delta^2}\dsum_{a>\mu}u_{aa1}^2.\nonumber
\end{align}
Here, the last inequality comes from choosing positive constants
$\varepsilon_0,\varepsilon, \delta',\eta$ sufficiently small such
that
\begin{equation*}
\dfrac{C_6\delta'}{\delta}\leq 2\varepsilon,\quad
(1-\varepsilon_0)(1-2\varepsilon)^3(1+\delta^m)\geq 1+\eta.
\end{equation*}
\par
Using \eqref{4.10}, \eqref{4.13} or \eqref{4.27}, we have
\begin{align}\label{4.28}
P_m^2\Big[&A_1+B_1+C_1+D_1-(1+\dfrac{\eta}{m})E_1\Big]\\
\geq&2P_m\kappa_1^{m-2}\dsum_{j\neq1}S_k^{jj}u_{jj1}^2-\dfrac{P_m\kappa_1^{m-3}C_\varepsilon'S_k}{\delta^2}\dsum_{j>\mu}u_{jj1}^2.\nonumber
\end{align}
Now, for $k-1\geq j>\mu$, by \eqref{2.53}, we have
\begin{equation*}
\begin{aligned}
\kappa_1S_k^{jj}&=\dfrac{\kappa_1}{\la_j}\la_jS_k^{jj}\geq\dfrac{\kappa_1}{\la_j}\theta
S_k\geq\dfrac{\theta S_k}{\delta'}.
\end{aligned}
\end{equation*}
For $j\geq k$, we have
$$\kappa_1S_k^{jj}\geq\kappa_1S_k^{k-1,k-1}\geq\dfrac{\kappa_1}{\la_{k-1}}\theta S_k\geq\dfrac{\theta S_k}{\delta'}.$$
Hence, choose $\delta'$ small enough such that
$$\delta'<\dfrac{\theta \delta^2}{C_\varepsilon'},$$
then \eqref{4.28} is nonnegative. We complete the proof of the
lemma.
\end{proof}

\par
Hence, a direct corollary of Lemma \ref{lem4.2} is the following.
\begin{corollary}\label{cor4.1}
There exist two finite sequences of positive numbers
$\{\delta_i\}_{i=1}^{k-1}$ and $\{\eta_i\}_{i=1}^{k-1}$, such that,
if the following inequality holds for some index $1\leq r\leq k-2$,
$$\dfrac{\lambda_r}{\lambda_1}\geq\delta_r,\quad \dfrac{\lambda_{r+1}}{\lambda_1}\leq\delta_{r+1},$$
then, for sufficiently large $K$ and $\lambda_1$, we have
\begin{equation}
A_1+B_1+C_1+D_1-(1+\dfrac{\eta_r}{m})E_1\geq0.
\end{equation}
\end{corollary}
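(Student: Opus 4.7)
The plan is to construct the two sequences inductively by repeated invocations of Lemma \ref{lem4.2}. I would begin by setting $\delta_1 = 1$, which makes the hypothesis $\lambda_1/\lambda_1 \geq \delta_1$ automatic. Proceeding inductively on $r = 1, 2, \ldots, k-2$: once $\delta_r \in (0,1]$ has been fixed, I would apply Lemma \ref{lem4.2} with $\mu = r$ and $\delta = \delta_r$. This yields positive constants $\delta_r'$ and $\eta_r$, both depending on $\delta_r$, such that whenever $\lambda_{r+1}/\lambda_1 \leq \delta_r'$ one has
$$A_1+B_1+C_1+D_1 - (1+\eta_r/m)E_1 \geq 0$$
for sufficiently large $K$ and $\lambda_1$. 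I would then set $\delta_{r+1} := \min\{\delta_r,\delta_r'\}$, which makes the $\delta_i$ nonincreasing. The final entry $\eta_{k-1}$ is not referenced by the conclusion (whose index runs only up to $k-2$) and can be set to any convenient positive value.

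Given these choices, the verification reduces to a one-line chain of substitutions. For any $r \in \{1,\ldots,k-2\}$ satisfying both $\lambda_r/\lambda_1 \geq \delta_r$ and $\lambda_{r+1}/\lambda_1 \leq \delta_{r+1}$, we have $\lambda_{r+1}/\lambda_1 \leq \delta_{r+1} \leq \delta_r'$; hence the hypotheses of Lemma \ref{lem4.2} are met with $\mu = r$ and $\delta = \delta_r$, and its conclusion is precisely the stated inequality with constant $\eta_r$.

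The argument is essentially bookkeeping and I do not foresee a genuine obstacle: the recursion involves only $k-2$ steps and each invocation of Lemma \ref{lem4.2} returns strictly positive constants, so the final $\delta_{k-1}$ and $\eta_{k-2}$ remain positive. The substantive analytic work, namely the control of the third-derivative terms by the concavity of the relevant power functions together with the $\kappa$-monotonicity inequality \eqref{2.5}, is already executed inside Lemma \ref{lem4.2}; the corollary simply packages that lemma so that it applies whenever a transition index $r$ exists in the ordered spectrum $\lambda_1 \geq \cdots \geq \lambda_n$, irrespective of its location.
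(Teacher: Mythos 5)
Your inductive construction ($\delta_1=1$, then $\delta_{r+1}=\min\{\delta_r,\delta_r'\}$ with $\delta_r',\eta_r$ supplied by Lemma \ref{lem4.2} applied at $\mu=r$, $\delta=\delta_r$) is correct and is exactly the bookkeeping the paper intends, since it presents the corollary as an immediate consequence of Lemma \ref{lem4.2} without further argument. Your choice of a nonincreasing sequence starting at $\delta_1=1$ also supports the dichotomy used later in the proof of Theorem \ref{th1.2}, so nothing is missing.
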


\par
Now, we can prove Theorem \ref{th1.2}.
\par
By Corollary \ref{cor4.1}, there exists some sequence
$\{\delta_i\}_{i=1}^{k-1}$. We consider the following two cases.
\par
Case 1: If $r=k-1$,$\lambda_{k-1}\geq\delta_{k-1}\lambda_1$,
obviously we have
$$f=\sigma_k+\alpha\sigma_{k-1}>\alpha\lambda_1\cdots\lambda_{k-1}\geq\delta_{k-1}^{k-2}\lambda_1^{k-1},$$
which implies $\lambda_1\leq C$. Hence, we complete the proof of
Theorem \ref{th1.2}.
\par
Case 2: There exists some index $1\leq r\leq k-2$, such that
$$\dfrac{\lambda_r}{\lambda_1}\geq\delta_r,\quad\dfrac{\lambda_{r+1}}{\lambda_1}\leq\delta_{r+1}.$$
Using Lemma \ref{lem4.1} and Corollary \ref{cor4.1}, we have
\begin{equation}\label{4.30}
\dsum_i(A_i+B_i+C_i+D_i)-E_1-(1+\dfrac{1}{m})\dsum_{i=2}^nE_i\geq0.
\end{equation}
Combining \eqref{4.8} with \eqref{4.30}, we have
\begin{align}\label{4.31}
0\geq&\dsum_{i=2}^n\dfrac{S_k^{ii}}{P_m^2}(\dsum_j\kappa_j^{m-1}u_{jji})^2+Nu_{ii}^2S_k^{ii}+\dfrac{S_k^{ii}u_{ii}}{u}-\dfrac{S_k^{ii}u_i^2}{u^2}-\dsum_sf_{p_s}\dfrac{u_s}{u}-Cu_{11}.
\end{align}
By \eqref{4.1}, for any fixed $i\geq2$, we have
\begin{align*}
-\dfrac{S_k^{ii}u_i^2}{u^2}=&-\dfrac{S_k^{ii}}{P_m^2}(\dsum_j\kappa_j^{m-1}u_{jji})^2+S_k^{ii}N^2u_i^2u_{ii}^2+\dfrac{2NS_k^{ii}u_i^2u_{ii}}{u}\nonumber\\
\geq&-\dfrac{S_k^{ii}}{P_m^2}(\dsum_j\kappa_j^{m-1}u_{jji})^2+\dfrac{2NC_0S_ku_i^2}{u}.
\end{align*}
In the above inequality, we have used \eqref{2.52}. Using the above
formula and (v) in section 2, (\ref{4.31}) becomes
\begin{align}
-\dfrac{C}{u}\geq&
Nu_{ii}^2S_k^{ii}-\dfrac{u_1^2S_k^{11}}{u^2}-Cu_{11}.
\end{align}
Since $\lambda\in\Gamma_k$, we know $u_{11}S_k^{11}\geq \theta S_k$
by \eqref{2.53}. Choose $N$ sufficiently large in the above formula,
such that
$$\dfrac{N}{2}u_{ii}^2S_k^{ii}-Cu_{11}>0,$$
we obtain
$$-\dfrac{C}{u}+\dfrac{CS_k^{11}}{u^2}\geq\dfrac{N}{4}S_k^{11}\lambda_1^2+\dfrac{N\theta S_k}{4}\lambda_1.$$
Hence, we complete the proof of Theorem \ref{th1.2}.

\section{A rigidity theorem for $k$ convex solutions}
In this section, we prove Theorem \ref{th1.3}. First, we have the
following lemma.
\begin{lemma}\label{lem5.1}
For the following Dirichlet problem of Sum Hessian equations,
\begin{equation}\label{5.1}
\left\{\begin{array}{lll}
 S_k(D^2u)=f (x),  &in &\Omega,\\
u=0,  & on&\pa{\Omega}.
\end{array} \right.
\end{equation}
Assume that $f>0$ is a smooth function in some domain $\Omega$, $u$
is a $k$ convex solution of the above problem.  Then there exists
some sufficiently large constant $\beta>0$, such that
\begin{equation}\label{5.2}
(-u)^\beta\Delta u\leq C.
\end{equation}
Here, the constants $C$ and $\beta$ only depend on $n, k$ and the
diameters of the domain $\Omega$.
\end{lemma}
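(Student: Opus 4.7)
The plan is to mimic the proof of Theorem \ref{th1.2} with the crucial simplification that $f = f(x)$ does not depend on $u$ or $Du$. I would choose the test function
$$\phi(x) = \beta \log(-u) + \log P_m + \frac{N}{2}|x|^2, \qquad P_m = \sum_j \kappa_j^m, \quad \kappa_j = \lambda_j + K_0,$$
where $K_0 = n(\sup_\Omega f / \alpha)^{1/(k-1)}$ so that Lemma \ref{lem2.4} gives $\kappa_j > 0$, and $\beta$, $m$, $N$ are large constants to be fixed. The key point is that no $|Du|^2$ term is needed in $\phi$, because differentiating the equation gives the cleaner identities
$$S_k^{ii} u_{iij} = f_j, \qquad S_k^{ii} u_{iijj} + S_k^{pq,rs} u_{pqj} u_{rsj} = f_{jj},$$
with no $f_u$, $f_{p_s}$, or $f_{p_sp_s}$ terms to absorb. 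Since $u=0$ on $\partial\Omega$ and $\phi \to -\infty$ there, the maximum is attained at an interior point $x_0$, where I would rotate coordinates so that $(u_{ij})$ is diagonal with $\lambda_1 \geq \cdots \geq \lambda_n$.

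Next I would compute $\phi_i = 0$ and $\phi_{ii} \leq 0$ at $x_0$, using Lemma \ref{lem2.2} for the second derivatives of $\log P_m$, and contract the inequality with $S_k^{ii}$. After inserting the identities above, one gets an inequality structurally identical to \eqref{4.8} but with the $f$-dependent corrections replaced by a harmless $-C$ (and without the $-\sum_s f_{p_s} u_s / u$ term). The third-order terms are then handled exactly as in Section 4: Lemma \ref{lem4.1} kills the $i \neq 1$ contribution, while the $i = 1$ contribution is controlled by the dichotomy of Lemma \ref{lem4.2} and Corollary \ref{cor4.1}, which splits into cases according to the first gap pattern $\lambda_r/\lambda_1 \geq \delta_r$ and $\lambda_{r+1}/\lambda_1 \leq \delta_{r+1}$. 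In the case $r = k-1$, $\lambda_1$ is directly bounded by $f$ through $\alpha \lambda_1 \cdots \lambda_{k-1} \leq f$; in the remaining cases I would combine \eqref{4.30} with the contracted inequality to arrive at an estimate of the form
$$\frac{N}{2} S_k^{ii} u_{ii}^2 \leq \frac{C \beta^2 S_k^{11}}{u^2} + C \lambda_1 + C.$$

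The main obstacle is ensuring the constants depend only on $n$, $k$, $f$, and $\mathrm{diam}(\Omega)$, and not on $\sup_\Omega |u|$ or $\sup_\Omega |Du|$. This is where having $f = f(x)$ (rather than $f(x,u,p)$) is essential: the $C^0$ bound on $u$ follows from comparison with the explicit radial barrier for $S_k(D^2 v) = \sup f$ on a ball containing $\Omega$, and the gradient bound $\sup_\Omega |Du|$ is then controlled on the boundary (where $u=0$) via the usual barrier construction and propagates to the interior by the maximum principle applied to $|Du|^2$, all with constants depending only on $n$, $k$, $\|f\|_{C^1}$, and $\mathrm{diam}(\Omega)$. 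Finally, absorbing the $S_k^{11}/u^2$ term on the right into $N S_k^{11} \lambda_1^2$ on the left forces
$$\lambda_1 \leq C(-u)^{-\beta}$$
at $x_0$ (for $\beta$ sufficiently large, arising from the $\beta \log(-u)$ in $\phi$). Since $\phi(x_0) \geq \phi(x)$ for all $x$, this yields $(-u)^\beta \lambda_1 \leq C$ throughout $\Omega$, which combined with $\Delta u \leq n \lambda_1$ gives \eqref{5.2}.
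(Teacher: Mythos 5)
Your overall strategy is the right one and matches the paper's: the same family of test functions $\log(-u)^{\beta}+\log P_m+\tfrac{N}{2}|x|^2$ with no gradient term, the simplified differentiated identities $S_k^{ii}u_{iij}=f_j$ and $S_k^{ii}u_{iijj}+S_k^{pq,rs}u_{pqj}u_{rsj}=f_{jj}$, the third-order machinery of Lemma \ref{lem4.1} and Corollary \ref{cor4.1}, and the $C^0$ bound from comparison with a quadratic barrier. However, your endgame has a genuine inconsistency. The term $\tfrac{N}{2}S_k^{ii}u_{ii}^2$ in your proposed final inequality is produced by a $\tfrac{N}{2}|Du|^2$ term in the test function (as in the proof of Theorem \ref{th1.2}), not by $\tfrac{N}{2}|x|^2$; the latter, after differentiating twice and contracting, yields only $N\sum_i S_k^{ii}$. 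Since you deliberately (and correctly) omit the $|Du|^2$ term, the quadratic-in-$\lambda_1$ good term you rely on to absorb $S_k^{11}/u^2$ and conclude $\lambda_1\le C(-u)^{-\beta}$ is simply not there. The paper closes the argument differently: after Corollary \ref{cor4.1} kills the third-order terms, the surviving good term is $\tfrac12\sum_i S_k^{ii}$, which is bounded below by $c_0\sigma_1^{1/(k-1)}$ via Newton--Maclaurin ($\sigma_{k-1}\ge\sigma_1^{1/(k-1)}\sigma_k^{(k-2)/(k-1)}$, etc.), while the bad term $\beta S_k^{ii}u_{ii}/u\ge k\beta f/u$ contributes only $-C\beta/u$; this yields $\sigma_1\le C(-u)^{-(k-1)}\beta^{k-1}$ at $x_0$ and hence the estimate with a large exponent $\beta$. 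This Newton--Maclaurin step is the missing idea in your write-up.

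A secondary issue: if you instead restore the $|Du|^2$ term to legitimize $NS_k^{ii}u_{ii}^2$, you then need the gradient bound you sketch, and "the maximum principle applied to $|Du|^2$" is not immediate for Hessian operators (one has $S_k^{ij}\partial_{ij}(|Du|^2/2)=\sum_l f_lu_l+\sum_iS_k^{ii}\lambda_i^2$, which requires an auxiliary barrier to exploit). The paper's route avoids any gradient information entirely, which is precisely why the constants in Lemma \ref{lem5.1} depend only on $n$, $k$, $f$ and $\mathrm{diam}(\Omega)$ and why the lemma can be applied to the rescaled functions in the rigidity argument without first proving a uniform gradient estimate.
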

\begin{proof}
Obviously, for sufficiently large $a$ and $b$, the function
$w=\dfrac{a}{2}|x|^2-b$ can control $u$ by comparison
principal~\cite{CNS3}, namely
$$w\leq u\leq 0.$$
Here $a$ and $b$ depend on the diameter of the domain $\Omega$.
Hence, in the following proof, the constant $\beta$, $C$ in
\eqref{5.2} can contain $\displaystyle\sup_\Omega |u|$.
\par
Since $u$ is $k$ convex solution, by (\ref{2.4}), there is some
constant $K_0>0$, such that $D^2u+K_0I\geq0$. We consider the test
function,
$$\phi=m\beta \log(-u)+\log P_m+\dfrac{m}{2}|x|^2,$$
where
\begin{equation*}
P_m=\dsum_j\kappa_j^m, \quad \kappa_j=\lambda_j+K_0.
\end{equation*}
Suppose $\phi$ achieves its maximum value at $x_0$ in $\Omega$. By
rotating the coordinate, we assume that $(u_{ij})$ is a diagonal
matrix at $x_0$, so $u_{ii}=\lambda_i$ and
$\kappa_1\geq\kappa_2\geq\cdots\geq\kappa_n$.
\par
Differentiating test function twice at $x_0$, we have
\begin{equation}\label{5.3}
\dfrac{\dsum_j\kappa_j^{m-1}u_{jji}}{P_m}+x_i+\dfrac{\beta u_i}{u}=0
\end{equation}
and
\begin{align}\label{5.4}
0\geq&\dfrac{1}{P_m}\Big[\dsum_j\kappa_j^{m-1}u_{jjii}+(m-1)\dsum_j\kappa_j^{m-2}u_{jji}^2+\dsum_{p\neq
q}\dfrac{\kappa_p^{m-1}-\kappa_q^{m-1}}{\kappa_p-\kappa_q}u_{pqi}^2\Big]\\
&-\dfrac{m}{P_m^2}\Big(\dsum_j\kappa_j^{m-1}u_{jji}\Big)^2+\dfrac{\beta
u_{ii}}{u}-\dfrac{\beta u_i^2}{u^2}+1.\nonumber
\end{align}
\par
At $x_0$, differentiating the equation \eqref{5.1} twice, we have
\begin{equation}\label{5.5}
S_k^{ii}u_{iij}=f_j
\end{equation}
and
\begin{equation}\label{5.6}
S_k^{ii}u_{iijj}+S_k^{pq,rs}u_{pqj}u_{rsj}=f_{jj}.
\end{equation}
Using  \eqref{5.3}, we have
\begin{equation}\label{5.7}
-\dfrac{\beta
u_i^2}{u^2}\geq-\dfrac{2}{\beta}\dfrac{\Big(\dsum_j\kappa_j^{m-1}u_{jji}\Big)^2}{P_m^2}-\dfrac{2x_i^2}{\beta}.
\end{equation}
Contacting $S_k^{ii}$ in both sides of \eqref{5.4}, using
\eqref{5.6} and \eqref{5.7}, we obtain
\begin{align}\label{5.8}
0\geq& \dfrac{1}{P_m}\Big[\dsum_j\kappa_j^{m-1}(f_{jj}-S_k^{pq,rs}u_{pqj}u_{rsj})+(m-1)S_k^{ii}\dsum_j\kappa_j^{m-2}u_{jji}^2\\
       &+S_k^{ii}\dsum_{p\neq q}\dfrac{\kappa_p^{m-1}-\kappa_q^{m-1}}{\kappa_p-\kappa_q}u_{pqi}^2\Big]
       -\dfrac{(m+\frac{2}{\beta})S_k^{ii}}{P_m^2}\Big(\dsum_j\kappa_j^{m-1}u_{jji}\Big)^2\nonumber\\
       &+\dfrac{\beta S_k^{ii}u_{ii}}{u}-\dfrac{2S_k^{ii}x_i^2}{\beta}+\dsum_iS_k^{ii}.\nonumber
\end{align}
Using the definitions of $A_i,B_i,C_i,D_i,E_i$ and \eqref{5.8}, we
have
\begin{align}\label{5.9}
0\geq&
\dsum_{i=1}^n\Big[A_i+B_i+C_i+D_i-(1+\dfrac{2}{m\beta})E_i\Big]
       +\dfrac{\beta
       S_k^{ii}u_{ii}}{u}\\&-\dfrac{2S_k^{ii}x_i^2}{\beta}+\dsum_iS_k^{ii}-\dfrac{C}{\kappa_1}.\nonumber
\end{align}

For sequence $\{\delta_i\}_{i=1}^{k-1}$ and $\{\eta_i\}_{i=1}^{k-1}$
appearing in Corollary \ref{cor4.1}, we choose $\beta$ sufficiently
large such that
$$\dfrac{2}{\beta}<\min\{\dfrac{1}{2},\eta_1,\cdots,\eta_{k-1}\}.$$
If $r=k-1$, $\lambda_{k-1}\geq\delta_{k-1}\lambda_1$, obviously we
have
$$f=\sigma_k+\alpha\sigma_{k-1}>\alpha\lambda_1\cdots\lambda_{k-1}\geq\delta_{k-1}^{k-2}\lambda_1^{k-1},$$
which implies $\lambda_1\leq C$. Thus, we obtain Lemma \ref{lem5.1}.
\par
If there exists some index $1\leq r\leq k-2$, such that
$$\dfrac{\lambda_r}{\lambda_1}\geq\delta_r,\quad \dfrac{\lambda_{r+1}}{\lambda_1}\leq\delta_{r+1},$$
then \eqref{5.9} becomes
\begin{equation}\label{5.10}
0\geq\dfrac{\beta
S_k^{ii}u_{ii}}{u}-\dfrac{2S_k^{ii}x_i^2}{\beta}+\dsum_iS_k^{ii}-\dfrac{C}{\kappa_1}
\end{equation}
by Lemma \ref{lem4.1} and Corollary \ref{cor4.1}.  We choose $\beta$
sufficiently large, such that
$$
-\dfrac{2S_k^{ii}x_i^2}{\beta}+\dfrac{1}{2}\dsum_iS_k^{ii}>0.
$$
By Newton-Maclaurin inequality, we have
$$\sigma_{k-1}\geq\sigma_1^{\frac{1}{k-1}}\sigma_k^{\frac{k-2}{k-1}},~~~\sigma_{k-2}\geq\sigma_1^{\frac{1}{k-2}}\sigma_{k-1}^{\frac{k-3}{k-2}},$$
which implies
$$\dsum_iS_k^{ii}=(n-k+1)\sigma_{k-1}+\alpha(n-k+2)\sigma_{k-2}\geq c_0\sigma_1^{\frac{1}{k-1}}.$$
By (v) in section 2,
$$\dfrac{S_k^{ii}u_{ii}}{u}=\dfrac{kS_k-\al\sigma_{k-1}}{u}>\dfrac{kS_k}{u}.$$
Substituting the above inequalities into \eqref{5.10}, we obtain
$$-\dfrac{\beta C}{u}\geq \dfrac{1}{2}\dsum_iS_k^{ii}-\dfrac{C}{\kappa_1}\geq \dfrac{c_0}{4}\sigma_1^{\frac{1}{k-1}}.$$
Thus, we obtain Lemma \ref{lem5.1}.
\end{proof}

\par
\textbf{Proof of Theorem 1.3:} The proof is classical as in
\cite{TW, LRW1}. For the convenience of the readers, the complete
proof is given here. Suppose $u$ is an entire solution of the
equation \eqref{1.7}. For arbitrary positive constant $R>1$, we
consider the set
$$\Omega_R=\{y\in R^n;u(Ry)\leq R^2\}.$$
Let
$$v(y)=\dfrac{u(Ry)-R^2}{R^2}.$$
We consider the following Dirichlet problem
\begin{equation}
\left\{\begin{array}{lll}
 {{\sigma }_{k}}( D^2v)+\alpha{{\sigma
}_{k-1}}( D^2v)=1,  &in &\Omega_R,\\
v=0,  & on&\pa{\Omega_R}.
\end{array} \right.
\end{equation}
By Lemma \ref{lem5.1}, we have the following estimates
\begin{equation}\label{5.12}
(-v)^\beta\Delta v\leq C.
\end{equation}
Here the constants $C$ and $\beta$ depend on $k$ and the diameter of
the domain $\Omega_R$. Now using the quadratic growth condition
appears in Theorem \ref{th1.3}, we have
$$c|Ry|^2-b\leq u(Ry)\leq R^2.$$
Namely
$$|y|^2\leq\dfrac{1+b}{c}.$$
Hence, $\Omega_R$ is bounded. Thus, the constant $C$ and $\beta$
become two absolutely constants. Consider the domain
$$\Omega_R'=\{y;u(Ry)\leq \dfrac{R^2}{2}\}\subset\Omega_R.$$
In $\Omega_R'$, we have
$$v(y)\leq-\dfrac{1}{2}.$$
Hence, \eqref{5.12} implies that in $\Omega_R'$, we have
\begin{equation*}
\Delta v\leq 2^\beta C.
\end{equation*}
Note that
$$\nabla_y^2v=\nabla_x^2u.$$
Thus, using the above two formulas, in
$\Omega_R'=\{x;u(x)\leq\dfrac{R^2}{2}\}$, we have
\begin{equation*}
\Delta u\leq C,
\end{equation*}
where $C$ is an absolutely constant. Since $R$ is arbitrary, we have
the above inequality in whole $R^n$. Using Evan-Krylov
theory~\cite{E}, we have
$$|D^2u|_{C^\alpha(B_R)}\leq C\dfrac{|D^2u|_{C^0(B_R)}}{R^\alpha}\leq\dfrac{C}{R^\alpha}.$$
Hence, when $R\rightarrow +\infty$, we obtain Theorem \ref{1.3}.

\section*{Acknowledgments:}

The last author wish to thank Professor Pengfei Guan for his
valuable suggestions and comments. We wish to thank the referees for
useful comments and suggestions.

\end{document}